\definecolor{UnivBlue}{RGB}{47, 42, 133}
 \newtheorem{theo}{\textbf{Theorem}\ }
[section]
\newtheorem{lem}[theo]{\textbf{Lemma}\ }
\newtheorem{prop}[theo]{\textbf{Proposition}\ }
\newtheorem{property}[theo]{\textbf{Property}\ }
\begin{document}
\title{A FUNCTIONAL LIMIT THEOREM FOR LATTICE OSCILLATING RANDOM WALK }
\maketitle
\begin{center}
{\bf Marc Peign\'e, Tran Duy Vo}
$^($\footnote{ 
 Institut Denis Poisson UMR 7013,  Universit\'e de Tours, Universit\'e d'Orl\'eans, CNRS, France. \\ marc.peigne@univ-tours.fr, Tran-Duy.Vo@lmpt.univ-tours.fr}$^)$  	 
     \end{center}
     \vspace{1cm}
     
\centerline{\bf  \small Abstract } 
 The paper is devoted to an invariance principle for Kemperman's model of oscillating random walk on $\mathbb{Z}$. This result appears as an extension of the invariance principal theorem  for classical random walks on $\mathbb Z$ or reflected random walks on $\mathbb N_0$. Relying on some natural  Markov sub-process which takes into account the oscillation of the random walks between $\mathbb Z^-$ and $\mathbb Z^+$, we first construct an aperiodic sequence of renewal operators acting on a suitable Banach space  and then apply a powerful theorem proved by S. Gou\"ezel.
   
  \vspace{0.5cm} 
  
  \noindent Keywords and phrases:   oscillating random walk,  invariance principle,  skew Brownian motion, renewal sequences of operators, Markov chain 


\section{Model and setting}
\subsection{Introduction} 
Consider two independent sequences of i.i.d. discrete random variables $(\xi_n)_{n \geq 1}$ and $(\xi'_n)_{n \geq 1}$, defined on a probability space $(\Omega, \mathcal{F}, \mathbb{P})$ and with respective distributions $\mu$ and $\mu'$. 

 For any fixed $\alpha \in [0,1]$, the oscillating random walk  $\mathcal{X}^{(\alpha)}=(X^{(\alpha)}_n)_{n\geq 0}$  is defined recursively by : $X_0^{(\alpha)}=x$, where $x \in \mathbb Z$ is fixed, and, for $n \geq 0$, 
 \begin{align} \label{system}
X^{(\alpha)}_{n+1}=\left\{
\begin{array}{lll}
X^{(\alpha)}_n+ \xi_n &\text{\rm if } X^{(\alpha)}_n \leq -1, \\  
\eta_n &\text{\rm if } X^{(\alpha)}_n=0,\\
X^{(\alpha)}_n +\xi'_n &\text{\rm if } X^{(\alpha)}_n \geq 1, 
\end{array}
\right.    
 \end{align} 
where $\eta_n:= B_n \xi_n +(1-B_n)\xi'_n$ and $(B_n)_{n\geq 0}$ is a sequence of i.i.d.  Bernoulli random variables (independent of $(\xi_n)$ and $(\xi'_n))$  with $\mathbb{P}[B_i=1]=\alpha=1-\mathbb{P}[B_i=0]$.

When we want to emphasize the dependence in $\mu$ and $\mu'$   of this oscillating process, we denote it by $\mathcal{X}^{(\alpha)}(\mu, \mu')$. 

This  spatially non-homogeneous random walk   was first introduced by \textsc{Kemperman} \cite{Kemperman} to model discrete-time diffusion in one dimensional space with three different media $\mathbb{Z^+}$ and $\mathbb{Z^-}$ and a barrier $\{0\}$. Whenever the process $\mathcal{X}^{(\alpha)}(\mu,\mu')$ stays on the negative half line, its excursion is directed by the jumps $\xi_n$ until it reaches the positive half line; then,  it continues being directed by the jumps $\xi'_n$ until returning in the negative half line and so on. After each visit of the origin, the increment is governed by the  distribution  of $\eta_n$, which is a convex combination of $\mu$ and $\mu'$. Our considering system is referred to as a special case when the barrier is degenerated as a single point; in general context, it may be any determined interval $[a, b] \cap \mathbb{Z}$ which passes through the origin, see \cite{KimLotov} for instance. Another interesting variant has been studied by \textsc{Madras} and \textsc{Tanny} in \cite{MT} dealing with an oscillating random walk with a moving  barrier at some constant speed. Although this basically leads to differences in its long-term behaviour compared to (\ref{system}), we may be able to trace this model back to (\ref{system}) by using some appropriate translations for its random increments. 

In the present paper, we prove an invariant principle for $\mathcal{X}^{(\alpha)}(\mu,\mu')$ towards the skew Brownian motion $(B^\gamma_t)_{t>0}$ on $\mathbb{R}$ with parameter $\gamma \in [0, 1]$. The diffusion $(B^\gamma_t)_{t>0}$ is obtained from the standard Brownian process by
independently altering the signs of the excursions away from  $0$, each
excursion being positive with probability $\gamma$ and negative with probability $1-\gamma$. By \cite{RY}, its heat kernel is given by: for any $x, y \in \mathbb{R}$ and $t>0$,
$$
p^\gamma_t(x,y):=p_t(x,y) + (2\gamma -1) \text{ sign}(y) \,p_t(0, |x|+|y|),
$$
where $p_t(x,y)= \frac{1}{\sqrt{2\pi t}} e^{{-(x-y)^2}/{2t}}$ is the transition density of the Brownian motion.

Throughout this paper, we suppose that the following general assumptions always hold:

\noindent $\bf H1 $ {\it $(\xi_n)_{n \geq 1}$ and $(\xi'_n)_{n \geq 1}$ are independent sequences of i.i.d. $\mathbb{Z}$-valued random variables, with finite variances $\sigma^2$ and $\sigma'^2$, respectively}.

\noindent $\bf H2 $ {\it {Both  distributions $\mu$ and $\mu'$ are  centered (i.e. $\mathbb E[\xi_n]=\mathbb E[\xi'_n]=0$}).}

\noindent $\bf H3 $ {\it Both distributions $\mu$ and $\mu'$  are strongly aperiodic on $\mathbb Z$, i.e. their supports are not included in $b+ a\mathbb{Z} $ for any $a>1$ and $ b \in \{0, \ldots, a-1\}$. }

\noindent $\bf H4$ {\it There exists $\delta>0$ such that $\mathbb{E}[(\xi^+_n)^{3+\delta}] +\mathbb{E}[(\xi'^-_n)^{3+\delta}] < +\infty$, where $\xi^+_n:= \max\{0, \xi_n\}$ and $\xi'^-_n:= \max\{0, -\xi'_n\}$}.

 Let us emphasize that, under  hypotheses {\bf H1}, {\bf H2} and {\bf H3}, the oscillating random walk $\mathcal{X}^{(\alpha)}$ is irreducible and null recurrent on $\mathbb Z$; this property is not stated in \cite{Vo} and we will  detail the argument later (see Property \ref{zaiugegfh}).

We denote by $S=(S_n)_{n \geq 1}$  (resp.  $S'=(S'_n)_{n \geq 1}$) the random walk defined by  $S_0=0$ and 
 $S_n= \xi_1 + \ldots+  \xi_n $ for $n \geq 1$  (resp.  $S'_0=0$ and 
 $S'_n= \xi'_1 + \ldots+ \xi'_n $ for $n \geq 1$). Let  $(\ell_i)_{i \geq 0}$ be the  sequence of strictly ascending ladder epochs associated with $S$ and defined recursively  by $\ell_0=0$ and, for $i \geq 1$, 
$$\ell_{i+1}:= \inf\{k > \ell_i \mid S_k > S_{\ell_i}\}$$ 
(with the convention $\inf \emptyset = +\infty$). 
We also  consider  the   sequence of descending ladder epochs $(\ell'_i)_{i \geq 0}$ of $S'$, defined as follows,
$$\ell'_0=0, \quad \text{and } \quad \ell'_{i+1}:= \inf\{k > \ell'_i \mid S'_k < S'_{\ell'_i}\}, \quad \text{for any } i \geq 1.$$
Under hypothesis $\bf H2$,  it holds 
$\displaystyle \mathbb{P}[\limsup_{n \to +\infty}S_n=+\infty]= \mathbb{P}[\liminf_{n \to +\infty} S_n'=-\infty] =1$;  hence, all the random variables $\ell_i$ and $\ell'_i$ are  $\mathbb{P}$-a.s. finite. In addition, both sequences $(\ell_{i+1}-\ell_i)_{i \geq 0}$ and $(S_{\ell_{i+1}}-S_{\ell_i})_{i \geq 0}$ contain i.i.d. random elements with distributions of $\ell_1$ and $S_{\ell_1}$, respectively; the same property holds for $(\ell'_{i+1}-\ell'_i)_{i \geq 0}$ and $(S'_{\ell'_{i+1}}-S'_{\ell'_i})_{i \geq 0}$. Consequently, processes  $(\ell_i)_{i \geq 0}, (S_{\ell_i})_{i \geq 0},  (\ell'_i)_{i \geq 0}$ and $(S'_{\ell'_i})_{i \geq 0}$ are all  random walks with i.i.d. increments.

We denote $\mu_+$  the distribution of $S_{\ell_1}$ and $ \mathcal{U}_+$   its potential defined by $ \mathcal{U}_+:=  \sum_{n \geq 0} (\mu_+)^{*n}$. Similarly   $\mu'_-$ denotes the distribution of $S'_{\ell'_1}$ and $ \mathcal{U'}_-:= \displaystyle \sum_{n \geq 0} (\mu'_-)^{*n}$. 

In particular, the oscillating random walk $\mathcal{X}^{(\alpha)}$ visits $\mathbb Z^-$ and $\mathbb Z^+$ infinitely often ; in order to control the excursions inside each of these these two half lines,  it is natural to consider  the following stopping  times $\tau^S(-x),\ \tau^{S'}(x)$ with $x \geq 1$, associated with $S$ and $S'$ respectively and defined by 
$$
\tau^S(-x):= \inf\{n \geq 1 \mid -x+ S_n \geq 0 \}, \quad {\rm and} \quad \tau^{S'}(x):= \inf\{n \geq 1 \mid x+ S'_n \leq 0 \}.
$$
In the sequel, we  focus on the  ``ascending renewal function''  $h_a$ of $S$ and the ``descending renewal function''  $h'_d$  of $S'$ defined by 
\begin{align*} 
   h_a(x) := \left\{
   \begin{array}{ll} \mathcal{U}_+[0, x]=
  \sum_{i\geq 0} \mathbb{P}[S_{\ell_i}\leq x] & \text{ if } x\geq 0, \\
  0 & \text{ otherwise}, 
   \end{array}
    \right.
        \end{align*}
and  
\begin{align*} 
   h'_d(x) := \left\{
   \begin{array}{ll}
 \displaystyle\mathcal{U}'_-[-x, 0]= \sum_{i\geq 0} \mathbb{P}[S'_{\ell'_i}\geq -x]  & \text{ if }  x\geq 0,\\
 0  & \text{ otherwise}. 
   \end{array}
    \right.
\end{align*}
We denote by $\check{h}_a$ the function $x \mapsto h_a(-x)$, it appears in the definition of the parameter $\gamma$ below.

  Both functions $h_a$ and $h'_d$ are increasing and satisfy  $h_a(x)=O(x)$ and $h'_d(x)=O(x)$. They appear crucially in the quantitative estimates of the fluctuations of $S$ and $S'$ ; see   subsection \ref{section-fluctuations} for precise statements.  
  
  Let us end this paragraph devoted to the presentation of quantities that play an important role in the rest of the paper.
  
  $\bullet$ 
By classical results on 1-dimensional random walks  \cite{Feller},  under hypotheses  {\bf H1}  and {\bf H2},  both constants $\displaystyle 
c=\dfrac{\mathbb{E}[S_{\ell_1}]}{\sigma\sqrt{2\pi}}$  and $\displaystyle c'=\dfrac{\mathbb{E}[-S_{\ell'_1}]}{\sigma'\sqrt{2\pi}} 
$ 
 are finite. 
 
 $\bullet$ Under hypotheses  {\bf H1}, {\bf H2} and {\bf H3}, the ``crossing sub-process'' $\mathcal X^{(\alpha)}_{\bf C}$ which corresponds to the sign changes of the   process $\mathcal X^{(\alpha)}$ is well defined  (see section \ref{crossingprocess}) and  it is positive recurrent  on its unique irreducible class.  We denote by $\nu$ its unique invariant probability measure on $\mathbb Z$.
 
 \subsection{Main result}

  {\bf From now on, we fix $\alpha \in [0, 1]$}  and   consider the  continuous and linearly interpolated version $(X_{nt}) $ of  $\mathcal{X}^{(\alpha)}$, defined by: for any  $n\geq 1$ and   $t\in (0,1]$, 
 $$X^{(\alpha)}_{nt}=  \sum_{i=1}^n \left(X^{(\alpha)}_{[nt]}+(nt-[nt]) \times J_{[nt]+1}\} \right)  \mathbb{1}_{ [\frac{i-1}{n}, \frac{i}{n} [}(t), $$
 where 
 $$
 J_{[nt]+1}:= \left\{
 \begin{array}{lll}
 \xi_{[nt]+1}& if&  X^{(\alpha)}_{[nt]} \leq -1\\
  \eta_{[nt]+1} &if& X^{(\alpha)}_{[nt]} =0\\
 \xi'_{[nt]+1}& if &X^{(\alpha)}_{[nt]} \geq 1.
  \end{array}
  \right.
 $$
We also set  \begin{align*}  
   X^{ (\alpha, n)}(t) := \left\{
   \begin{array}{lll}
\frac{X^{(\alpha)}_{nt}}{\sigma \sqrt{n}} &  \text{if } X _{nt}\leq 0,\\ \\
\frac{X^{(\alpha)}_{nt}}{\sigma' \sqrt{n}} &  \text{if } X _{nt}\geq 0.
   \end{array}
    \right.
\end{align*}
The main result of this paper is the following one.
  \begin{theo} \label{main_paper02} 
  Assume that hypotheses {\bf H1}--{\bf H4} are satisfied. Then, as $n \to +\infty$, the normalized stochastic process $\{X^{(\alpha, n)}(t), t\in [0,1]\}_{n\geq 1}$ converges weakly in the space of continuous function $C([0,1])$ to the skew Brownian motion $W_{\gamma}:=\left\{W_\gamma(t), t\in [0,1] \right\}$ with  parameter $\displaystyle \gamma=\frac{c'\nu(h'_d)}{c\nu(\check{h}_a)+ c'\nu(h'_d)}.$
\end{theo}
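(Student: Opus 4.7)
The strategy is the one announced in the abstract: decompose each trajectory of $\mathcal{X}^{(\alpha)}$ into a sequence of excursions on either side of $0$, organized by the crossing sub-process $\mathcal{X}^{(\alpha)}_{\mathbf{C}}$, and apply Gou\"ezel's theorem on renewal sequences of operators to obtain sharp asymptotics for the rescaled walk. Concretely, I would let $T_0 := 0$ and $T_{k+1} := \inf\{n > T_k \mid \mathrm{sign}(X^{(\alpha)}_n) \neq \mathrm{sign}(X^{(\alpha)}_{T_k})\}$ be the successive sign-change epochs, so that $\mathcal{X}^{(\alpha)}_{\mathbf{C}} = (X^{(\alpha)}_{T_k})_{k \geq 0}$ is the positive recurrent Markov chain with invariant law $\nu$. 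Between two crossings, starting from a point $-x \leq -1$ the walk behaves as $-x + S$ killed on first entrance to $\mathbb{Z}^+$, of duration $\tau^S(-x)$; symmetrically, starting from $x \geq 1$ on the positive side one uses $x + S'$ and $\tau^{S'}(x)$. Classical fluctuation theory, sharpened by the moment condition $\mathbf{H4}$, yields the key excursion-tail estimates
\[
\mathbb{P}[\tau^S(-x) > n] \sim \frac{h_a(x)}{c\sqrt{n}}, \qquad \mathbb{P}[\tau^{S'}(x) > n] \sim \frac{h'_d(x)}{c'\sqrt{n}} \qquad (x \geq 1),
\]
together with convergence of each rescaled excursion to a Brownian meander.

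To translate these pieces into operators, fix a time horizon $N$ and, for $\zeta \in \mathbb{R}$ and $n \geq 1$, introduce the renewal operators
\[
R_n(\zeta) f(x) := \mathbb{E}_x\!\left[ e^{i\zeta X^{(\alpha)}_n /\sqrt{N}}\, f(X^{(\alpha)}_{T_1})\, \mathbf{1}_{\{T_1 = n\}} \right]
\]
acting on a Banach space $\mathcal{B}$ of functions on $\mathbb{Z}$ equipped with a weight comparable to $\check h_a + h'_d$ (so that $\nu$ induces a continuous linear form through the $O(x)$ bound on both renewal functions). The characteristic function of $X^{(\alpha,N)}(1)$ is then extracted from $\sum_{k \geq 0}\bigl(\sum_n R_n(\zeta)\bigr)^k$ after a renewal decomposition along the sequence $(T_k)$. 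The tail estimates above give the operator-norm asymptotic
\[
R_n(0) \sim \frac{1}{n^{3/2}}\bigl(c\, \check h_a \otimes \pi_- + c'\, h'_d \otimes \pi_+\bigr),
\]
where $\pi_\pm$ are the limiting entrance laws on the opposite half-line, while $\mathbf{H3}$ supplies the spectral aperiodicity required by Gou\"ezel's theorem. Feeding these inputs into the operator renewal theorem yields the finite-dimensional asymptotics of $X^{(\alpha,n)}$; tightness on $C([0,1])$ then follows from standard maximal inequalities applied inside each excursion combined with the meander convergence.

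The last step is to identify the limit. On the time scale $n$, the successive excursions collapse to the excursions of a diffusion away from $0$, each one being labelled independently positive or negative with respective asymptotic probability $\gamma$ and $1-\gamma$. By the ergodic theorem applied to $\mathcal{X}^{(\alpha)}_{\mathbf{C}}$ and the excursion weights read off from the $n^{-3/2}$ asymptotic of $R_n(0)$, the cumulative duration mass of positive excursions equals $c'\nu(h'_d)$, while that of negative ones equals $c\,\nu(\check h_a)$; hence
\[
\gamma = \frac{c'\nu(h'_d)}{c\,\nu(\check h_a)+c'\nu(h'_d)},
\]
and the limit diffusion is precisely the skew Brownian motion $W_\gamma$, identified through its heat kernel $p_t^\gamma$ recalled in the introduction.

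The main obstacle, as is typical for this method, will be the verification of the analytic hypotheses of Gou\"ezel's theorem for $(R_n(\zeta))_n$ uniformly in $\zeta$ near $0$. Two points are particularly delicate: (i) choosing $\mathcal{B}$ so that $\sum_n R_n(0)$ is quasi-compact on $\mathcal{B}$ with $1$ as a simple dominant eigenvalue whose eigenstructure encodes $\nu$, and (ii) upgrading the pointwise excursion-tail estimates into genuinely summable operator-norm remainders, which is exactly where the extra $\delta$ in $\mathbf{H4}$ has to be invested and where the two weights $\check h_a$ and $h'_d$ must be handled simultaneously on the two half-lines. Once these two ingredients are established, the $C^2$ expansion of the dominant eigenvalue of $\sum_n R_n(\zeta)$ at $\zeta = 0$ supplies the Gaussian ingredient of the limit, and the identification of $\gamma$ becomes a matter of bookkeeping.
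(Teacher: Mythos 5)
Your high-level plan matches the paper's: decompose along the crossing times, run Gou\"ezel's operator renewal theorem on a Banach space weighted so that $\check h_a$ and $h'_d$ are admissible, read $\nu$ off the eigenprojector, and obtain $\gamma = c'\nu(h'_d)\big/\bigl(c\nu(\check h_a)+c'\nu(h'_d)\bigr)$ from the relative excursion weights. But the mechanism you propose for actually producing the law of $X^{(\alpha,n)}(t)$ has a genuine flaw that the paper's proof is specifically engineered to avoid.

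Your operator $R_n(\zeta)f(x)=\mathbb{E}_x\bigl[e^{i\zeta X_n/\sqrt{N}} f(X_{T_1})\mathbf{1}_{\{T_1=n\}}\bigr]$ only sees the endpoint of a \emph{completed} excursion, because on $\{T_1=n\}$ one has $X_n=X_{T_1}$, which is a point just across $0$ and hence $O(1)$; the phase factor is thus $1+O(\zeta/\sqrt N)$ and carries no Gaussian information. More fundamentally, a renewal decomposition indexed by the crossing times $(T_k)$ cannot by itself produce the position at a deterministic time $[nt]$: that time generically falls strictly inside an excursion, and the order-$\sqrt n$ displacement of the walk at $[nt]$ comes from the \emph{incomplete} excursion between the last crossing before $[nt]$ and $[nt]$, not from the crossing endpoints. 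This is why the paper supplements the renewal asymptotic $\sqrt n\,\mathcal H_n\to \mathbf c^{-1}\Pi$ (Proposition~\ref{convergeH}) with the conditional limit theorems \eqref{one}--\eqref{two} for the Brownian meander and the normalized Brownian excursion: $\mathcal H_{[ns_1]}$ handles the last complete crossing, and the meander/excursion law handles the straddling piece. A $C^2$ perturbative expansion of the dominant eigenvalue of $\sum_n R_n(\zeta)$ would not replace this: there is no Nagaev--Guivarc'h-type family here whose top eigenvalue controls the fluctuation at time $[nt]$, and indeed the paper never constructs one.

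Two further inaccuracies. First, your tail asymptotics are inverted: the correct statements (Lemma~\ref{equal} a)) are $\mathbb{P}[\tau^S(-x)>n]\sim 2c\,h_a(x)/\sqrt n$ and $\mathbb{P}[\tau^{S'}(x)>n]\sim 2c'\,h'_d(x)/\sqrt n$, not $h_a(x)/(c\sqrt n)$ and $h'_d(x)/(c'\sqrt n)$; if you carry your version through the bookkeeping you obtain $\gamma=\frac{\nu(h'_d)/c'}{\nu(\check h_a)/c+\nu(h'_d)/c'}$, which is not the stated parameter. Your later assignment of masses $c'\nu(h'_d)$ and $c\nu(\check h_a)$ implicitly uses the correct tails, so your argument is internally inconsistent. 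Second, there is a subtle but essential analytic point the paper flags explicitly: once the position at $[nt]$ is expressed as $\sqrt n\,\mathcal H_{[ns]}(\psi_n)(x)$ with a meander-type observable $\psi_n$, one cannot in general prove $\psi_n\to\psi$ in $\mathcal B_\delta$-norm without much stronger moment assumptions, and the paper therefore bypasses norm convergence by the truncation device of Lemma~\ref{product} (conditions (i)--(iii) with a finite exceptional set $V_\epsilon$). Any proof along your lines has to either reproduce that device or assume considerably more than $\mathbf{H4}$; this is precisely where the extra $\delta$ is spent, and it is not a matter of ``bookkeeping.''
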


 Let us  clarify  the value of  the parameter $\gamma$ in two peculiar cases of (\ref{system}).

$\bullet$   When $\mu=\mu'$, the chain $\mathcal{X}^{(\alpha)}$ is an ordinary random walk on $\mathbb{Z}$ directed by the unique type of jumps $(\xi_n)_{n \geq 1}$ and the limit diffusion $W_\gamma$ is the Brownian motion. In this case, the parameter  $\gamma$ equals $ \frac{1}{2}$ since the sequences of ladder heights $(S_{\ell_i})_{i \geq 1}$ and $(-S'_{\ell'_i})_{i \geq 1}$ coincide.

$\bullet$  When $\mu(x)= \mu'(-x)$ for any $x \in \mathbb Z$, the random walk $\mathcal X^{(\alpha)}$ is the so-called ``anti-symmetric  random walk" (or ``reflected random walk'' as usual), which appears in several works, see for instance \cite{EP} and \cite{PW}. By setting $\xi_n= -\xi'_n$, the behaviour of the chain $\mathcal{X^{(\alpha)}}$ on positive and negative half lines, respectively, are mirror images of each other. Hence, we may "glue" them together to get an unifying Markov chain on $\mathbb{Z}^+ \cup \{0\}$ receiving $\{0\}$ as its reflecting boundary. Accordingly, $\gamma=1$ in this case and it matches perfectly with the  result in \cite{NP}, which states that   the normalized reflected random walk (constructed as above) converges weakly in $C([0,1])$ towards the absolute value of the standard  Brownian motion. 
 
$\bullet$ Notice that the limit process $W_\gamma$  does not depend on $\alpha \in [0, 1]$. Henceforth, we fix $\alpha$ and  set  $\mathcal X^{(\alpha)}= \mathcal X$ in order to simplify the notations.
\subsection{Notations} 
We set $\mathbb{Z}:= \mathbb{Z}^+ \cup \mathbb{Z}^- \cup \{0\},\ \mathbb{N}:=\{1, 2, 3, \ldots\}$ and   $\overline{\mathbb{D}}$ the closed unit ball in $\mathbb C$. Given two positive real sequences ${\bf a}=(a_n)_{n \in \mathbb{N}}$ and ${\bf b}=(b_n)_{n \in  \mathbb{N}}$, we write as usual 
\begin{itemize}
\item $a_n \thicksim b_n$ if $\displaystyle \lim_{n\to \infty} a_n / b_n=1$, 
\item $a_n \approx b_n$ if $\displaystyle\lim_{n\to \infty}(a_n-b_n)=0$,
\item $a_n= O(b_n)$ if $\displaystyle \limsup_{n \to \infty} a_n/b_n < +\infty,$ 
\item $a_n= o(b_n)$ if $\displaystyle\lim_{n \to \infty} a_n/b_n = 0$,
 \item $\bf a \preceq b$ if $a_n \leq c \  b_n$ for some constant $c>0$, 
 \item $\bf a \asymp b$ if $\dfrac{1}{c} \ b_n\leq a_n \leq c \  b_n$ for some constant $c \geq 1.$
\end{itemize}

\vspace{2mm}

{ \it The paper is organized as follows.   In Section  2, we recall some important estimates in the theory of fluctuations of random walks; we introduce  in particular the renewal functions associated with 1-dimensional random walks and relative conditional limit theorems. These helpful tools  appear in Section 4  to compute the  multi-dimensional distribution of the limit process. The center of gravity of the paper is Section 3 where we adapt the approach used in  \cite{NP} in the case of the reflected random walk (with proper adjustments to derive Corollary \ref{convergeH} and to determine the parameter $\gamma$ later on). The last two sections are devoted to the proof of Theorem \ref{main_paper02} and the appendix}. 
\section{Auxiliary results for random walks}
In this section, we present some classical results on fluctuations of random walk on $\mathbb Z$.
\subsection{Asymptotic estimates for  fluctuations of  a random walk} \label{section-fluctuations}

The following statement summarizes   classical results  on fluctuations of random walks which are used below at various places (for instance, see Proposition $11$ in \cite{Doney12}, Theorem A in \cite{Kozlov} et al). Recall that $\displaystyle 
c=\dfrac{\mathbb{E}[S_{\ell_1}]}{\sigma\sqrt{2\pi}}$  and $\displaystyle c'=\dfrac{\mathbb{E}[-S_{\ell'_1}]}{\sigma'\sqrt{2\pi}} 
$.

\begin{lem}\label{equal}   (Asymptotic property)   Under assumptions {\bf H1}-- {\bf H3}, for any $x, y \geq 1$, it holds, as $n\to \infty$, 
\begin{enumerate}[a)]
\item $\mathbb{P}[\tau^{S}(-x)>n] \thicksim 2c \,\frac{h_a(x)}{\sqrt{n}}, \quad  and \quad  \mathbb{P}[\tau^{S'}(x)>n] \thicksim 2c'\, \frac{h'_d(x)}{\sqrt{n}};$
\item $\mathbb{P}[\tau^{S}(-x)>n, -x+S_n=-y] \thicksim\frac{1}{\sigma\sqrt{2\pi}}\, \frac{h_a(x) \,h_d(y)}{n^{3/2}}$,

 \hspace{3cm} and \quad $\mathbb{P}[\tau^{S'}(x)>n, x+S'_n=y] \thicksim\frac{1}{\sigma'\sqrt{2\pi}}\, \frac{h'_d(x)\, h'_a(y)}{n^{3/2}};$
 
 where $h_d$ (resp. $h'_a$) is the descending (resp. ascending) renewal function associated with the random walk $S$ (resp. $S'$).
    \item $\mathbb{P}[\tau^{S}(-x)=n] \thicksim c\, \frac{h_a(x)}{n^{3/2}}, \quad  and \quad \mathbb{P}[\tau^{S'}(x)=n] \thicksim c'\, \frac{h'_d(x)}{n^{3/2}};$
\end{enumerate}
 
\end{lem}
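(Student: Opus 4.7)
The plan is to reduce each item to an assertion about the random walk $S$: the two statements in each part are perfectly symmetric under the substitution swapping $S$ with $-S'$ (and ascending $\leftrightarrow$ descending ladder structures), so it suffices to establish the first version, abbreviating $\tau_x := \tau^S(-x) = \inf\{n \geq 1 : S_n \geq x\}$. All three items rest on the same two building blocks: the Wiener-Hopf factorization, and the local limit theorem for $S$ on $\mathbb Z$, which is valid thanks to the finite variance assumption {\bf H1} and the strong aperiodicity {\bf H3}.

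For part (a), I would decompose $\{\tau_x > n\}$ according to the last strictly ascending ladder epoch $\ell_K \leq n$. On this event, $\max_{k \leq n} S_k = S_{\ell_K} < x$, and by the strong Markov property at $\ell_K$,
\[
\mathbb{P}[\tau_x > n] \;=\; \sum_{m \leq n}\sum_{0 \leq y < x} u(m,y)\, \mathbb{P}[\ell_1 > n - m],
\]
where $u(m,y) := \sum_{K \geq 0} \mathbb{P}[\ell_K = m,\, S_{\ell_K} = y]$ is the joint ladder renewal density. The crucial input $\mathbb{P}[\ell_1 > n] \sim 2c/\sqrt n$ follows from the Sparre Andersen identity, the Wiener-Hopf factorization and a Karamata Tauberian argument (using {\bf H1}--{\bf H2}); the renewal identity $\sum_{m,\, y < x} u(m,y)= h_a(x-1)$ combined with a dominated-convergence analysis of the convolution then yields an equivalent of the announced form $2c\, h_a(x)/\sqrt n$.

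For part (b), the natural tool is the time-reversal duality $\tilde S_k := S_n - S_{n-k}$, which has the same law as $S$. Under this reversal, the event $\{\tau_x > n,\ S_n = x-y\}$ translates into a joint constraint on $\tilde S$ that decouples, via the Alili--Chaumont decomposition at the overall minimum (equivalently, via the Wiener-Hopf factorization), into an ``ascending'' piece involving the ladder structure of $S$ on one side (contributing $h_a(x)$) and a ``descending'' piece involving the ladder structure of $-S$ on the other (contributing $h_d(y)$). The local limit theorem for $S$ at time $n$ produces the Gaussian pre-factor $1/(\sigma\sqrt{2\pi n})$, and asymptotic analysis of the remaining finite convolution of renewal densities against the Gaussian kernel yields the equivalent $h_a(x)\,h_d(y)/(\sigma\sqrt{2\pi}\,n^{3/2})$. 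This is the main obstacle: the local estimate must be controlled with an error that is \emph{uniform in $y$}, both to evaluate the convolution asymptotically here and to justify termwise limits in part (c).

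Part (c) then follows from the elementary identity
\[
\mathbb{P}[\tau_x = n] \;=\; \sum_{y \geq 1} \mathbb{P}[\tau_x > n-1,\, S_{n-1} = x - y]\,\mathbb{P}[\xi_1 \geq y],
\]
obtained by conditioning on $(S_{n-1},\xi_n)$. Inserting the equivalent from (b) and using the classical Wiener-Hopf identity $\sum_{y \geq 1} h_d(y)\,\mathbb{P}[\xi_1 \geq y] = \mathbb{E}[S_{\ell_1}] = c\,\sigma\sqrt{2\pi}$ (which comes from the duality between ascending and descending ladder variables) produces $\mathbb{P}[\tau_x = n] \sim c\, h_a(x)/n^{3/2}$. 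Since the three assertions are classical, in practice one quotes Doney's \cite{Doney12} and Kozlov's \cite{Kozlov} estimates rather than reproducing the full derivation from scratch, and that is likely the route the authors take here.
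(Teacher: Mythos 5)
You correctly anticipated the paper's approach: Lemma~\ref{equal} is stated without proof and the authors simply cite the literature (Proposition~11 of Doney~\cite{Doney12} and Theorem~A of Kozlov~\cite{Kozlov}), which is exactly what your closing remark predicts; your sketch (last-ladder-epoch decomposition and Tauberian argument for (a), time-reversal plus local limit theorem for (b), conditioning on $(S_{n-1},\xi_n)$ and the Wiener--Hopf identity $\sum_{y\geq 1} h_d(y)\,\mathbb P[\xi_1\geq y]=\mathbb E[S_{\ell_1}]$ for (c)) is indeed the standard argument underlying those references. One small slip: the building block in (a) should read $\mathbb P[\ell_1>n]\sim 2c\,h_a(1)/\sqrt n$ rather than $2c/\sqrt n$, consistent with your own renewal identity $\sum_{m,\,y<x}u(m,y)=h_a(x-1)$ specialized to $x=1$ (and more generally the constant one extracts comes with the factor $\mathbb E[S_{\ell_1}]/\sigma$ packaged into $c$, not the reciprocal), but this does not affect the structure of the argument.
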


\begin{lem} \label{upperbound}
    {\it  (Upper bound)}  For any $n \geq 1$, it holds
    \begin{enumerate}[a)]
    \item $\mathbb{P}[\tau^{S}(-x)>n] \preceq\, \frac{1+x}{\sqrt{n}},  \quad  and \quad \mathbb{P}[\tau^{S'}(x)>n] \preceq\, \frac{1+x}{\sqrt{n}};$
\item $\mathbb{P}[\tau^{S}(-x)>n, -x+S_n=-y] \preceq\,  \frac{(1+x)(1+y)}{n^{3/2}}$,  

 \hspace{3cm} and \quad $\mathbb{P}[\tau^{S'}(x)>n, x+S'_n=y] \preceq\, \frac{(1+x)(1+y)}{n^{3/2}};$
 
\item $\mathbb{P}[\tau^{S}(-x)=n] \preceq  \frac{1+x}{n^{3/2}}, \quad  and \quad \mathbb{P}[\tau^{S'}(x)=n] \preceq  \frac{1+x}{n^{3/2}}$. 
    \end{enumerate}
\end{lem}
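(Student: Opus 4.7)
My plan is to note that the three estimates in Lemma \ref{upperbound} are uniform-in-$(x,y)$ strengthenings of the pointwise asymptotics in Lemma \ref{equal}. The guiding idea is first to establish the sharper uniform inequalities
\[ \mathbb{P}[\tau^S(-x) > n] \preceq \frac{h_a(x)}{\sqrt n}, \qquad \mathbb{P}[\tau^S(-x) > n, -x+S_n = -y] \preceq \frac{h_a(x)\, h_d(y)}{n^{3/2}}, \]
and then to conclude by exploiting the linear growth $h_a(x), h_d(x), h'_a(x), h'_d(x) \preceq 1 + x$. This last point is itself elementary: each of these renewal functions is the potential of a ladder-height random walk whose step has finite, strictly positive first moment (from $\bf H1$--$\bf H2$), so by the elementary renewal theorem each of $h_a(x)/x$ and its three analogues converges to a finite positive limit as $x \to \infty$; combined with boundedness on compact intervals this gives the claim.

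For part (a), I would invoke the classical uniform bound $\mathbb{P}[\tau^S(-x) > n] \preceq h_a(x)/\sqrt n$, which follows from the Wiener-Hopf / Sparre Andersen factorization: the generating function $\sum_n s^n \mathbb{P}[\tau^S(-x) > n]$ can be written in terms of the ascending-ladder variables, and applying Karamata's Tauberian theorem together with the regular variation of index $1/2$ of $\mathbb{P}[\ell_1 > n]$ (itself a consequence of $\mathbb{E}[\xi^2] < \infty$) yields the bound uniformly in $x \geq 1, n \geq 1$. The same argument handles $\tau^{S'}(x)$, and substituting the linear bound on $h_a$ (resp. $h'_d$) gives (a).

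For part (b), I would apply time reversal: the substitution $\check S_k := S_n - S_{n-k}$ produces a random walk $\check S$ with the same law as $S$ (since the steps $\xi_n, \ldots, \xi_1$ are i.i.d.\ with distribution $\mu$), and the event $\{\tau^S(-x) > n,\ -x+S_n = -y\}$ translates to $\{\check S_j \geq 1 - y \text{ for } j = 0, \ldots, n-1,\ \check S_n = x - y\}$. A shift by $y - 1$ recasts this as a walk starting at $y - 1 \geq 0$, staying non-negative for $n-1$ steps and ending at $x - 1$. A sharp local limit theorem for random walks conditioned to stay positive --- as in the works of Caravenna, Vatutin-Wachtel, or Doney --- then gives, uniformly in $x, y \geq 1$ and $n \geq 1$,
\[ \mathbb{P}[\tau^S(-x) > n,\ -x+S_n = -y] \preceq \frac{h_a(x-1)\, h_d(y-1)}{n^{3/2}} \preceq \frac{(1+x)(1+y)}{n^{3/2}}. \]
Part (c) is then an immediate corollary: decomposing at time $n-1$,
\[ \mathbb{P}[\tau^S(-x) = n] = \sum_{y \geq 1} \mathbb{P}[\tau^S(-x) > n-1,\ -x+S_{n-1} = -y]\, \mathbb{P}[\xi_1 \geq y], \]
inserting the bound from (b), and using $\sum_{y \geq 1}(1+y)\,\mathbb{P}[\xi_1 \geq y] \preceq \mathbb{E}[(\xi_1^+)^2] < \infty$ from $\bf H1$ gives the bound for $n \geq 2$; the case $n = 1$ is trivial. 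The main obstacle will be the uniform version of (b): the pointwise asymptotic in Lemma \ref{equal}(b) does not automatically yield uniformity in $(x, y)$, and establishing this uniformity requires the full strength of the sharp local limit theorem for positive walks rather than a soft compactness argument.
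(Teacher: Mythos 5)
The paper does not actually prove Lemma \ref{upperbound}: it presents both Lemma \ref{equal} and Lemma \ref{upperbound} as classical fluctuation results and refers the reader to the literature (Proposition~11 of Doney \cite{Doney12}, Theorem~A of Kozlov \cite{Kozlov}, and related work). So any comparison is between your proposal and the cited sources rather than an in-text argument.

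Your auxiliary observation that $h_a, h_d, h'_a, h'_d$ grow at most linearly is correct and the argument via the elementary renewal theorem is the right one (finiteness of $\mathbb{E}[S_{\ell_1}]$ uses Chow--Lai \cite{ChowLai} under \textbf{H1}--\textbf{H2}, and strict positivity is trivial since $S_{\ell_1}\geq 1$). Your derivation of part~$c)$ from part~$b)$ by summing over the last pre-crossing position is exactly the computation the paper performs to obtain display~\eqref{eventS}, and it is correct, including the use of $\sum_{y\geq 1}(1+y)\mathbb{P}[\xi_1\geq y]\preceq \mathbb{E}[(\xi_1^+)^2]<\infty$. For part~$b)$, the time-reversal step is correct but unnecessary: the uniform local estimates of Vatutin--Wachtel and Doney \cite{Doney12} apply directly to the walk started at $-x$, staying negative, and landing at $-y$, giving $\mathbb{P}[\tau^S(-x)>n,\,-x+S_n=-y]\preceq h_a(x)h_d(y)/n^{3/2}$ uniformly; you are in effect citing the same class of results the paper cites, which is legitimate.

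The genuine gap is in part~$a)$. Karamata's Tauberian theorem, applied to the generating function $\sum_n s^n\mathbb{P}[\tau^S(-x)>n]$, yields for each \emph{fixed} $x$ the asymptotic $\mathbb{P}[\tau^S(-x)>n]\sim 2c\,h_a(x)/\sqrt n$ as $n\to+\infty$ (that is, Lemma~\ref{equal}$a)$), but it does not by itself produce a constant valid uniformly over all $x\geq 1$ and $n\geq 1$. Tauberian theorems control the tail in $n$ for a given sequence; nothing in that machinery tracks how the implicit error term varies with the parameter $x$. This is precisely the same uniformity issue you correctly flag for part~$b)$, yet your proposal treats part~$a)$ as if the Tauberian route resolved it. To obtain the uniform bound of part~$a)$ one must instead invoke a genuinely uniform estimate, for instance the inequality $\mathbb{P}[\tau^S(-x)>n]=\mathbb{P}[M_n\leq x-1]\preceq h_a(x)/\sqrt n$ established via the ladder/duality decomposition of the running maximum $M_n$ (this is the content of the cited references), or simply sum the uniform bound from part~$b)$ over $y\geq 1$. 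As written, your part~$a)$ would need to be replaced by one of these arguments.
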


\noindent  
As a direct consequence of $b)$ in Lemmas \ref{equal} and \ref{upperbound}, for any $x\geq 1$ and  $w \geq 0$, 
\begin{align}\label{eventS}
\mathbb{P}[\tau^{S}(-x)=n, -x+S_n=w] \preceq\,  \frac{1+x}{n^{3/2}} \sum_{z \geq w+1} z \mu(z).
\end{align}
Indeed, for any $ n \geq 1$,
\begin{align*} 
\mathbb{P}[\tau^{S}(-x)=n, &-x+S_n =w]  \\
 &=  \sum_{y \geq 1}\mathbb{P}[\tau^{S}(-x)=n, -x+S_{n-1}=-y, -y+\xi_n=w]  
\\
&= \sum_{y \geq 1}\mathbb{P}[\tau^{S}(-x)>n-1, -x+S_{n-1}=-y, -y+\xi_n=w]  
\\
&=  \sum_{y \geq 1}\mathbb{P}[\tau^{S}(-x)>n-1, -x+S_{n-1}=-y]\mu(y+w)
\\
&\preceq  \dfrac{1+x}{n^{3/2}}  \ \underbrace{\sum_{y \geq 1} (1+y)\mu(y+w)}_{\preceq \ \displaystyle \sum_{z\geq w+1} z\mu(z)<+\infty}.
\end{align*}
 Notice also that, more precisely it holds
 $$
 \mathbb{P}[\tau^{S}(-x)=n, -x+S_n =w] \thicksim {h_a(x)\over \sigma \sqrt{2\pi} n^{3/2}}\sum_{y \geq 1} h_d(y) \mu(y+w).
 $$
%
\subsection{Conditional limit theorems}
It is worth remarking some necessary limit theorems which are very helpful for us to control the fluctuation of excursions between two consecutive successive crossing times and contribute significantly to reduce the complexity when dealing with multidimensional distribution of these excursions.
Now, assume that $\mathbb{E}[\xi'_1]=0$ and $\mathbb{E}[(\xi'_1)^2]<+\infty$ and let  $(S'(t))_{t \geq 0}$ be the continuous time process constructed from the sequence $(S'_n)_{n \geq 0}$ by using the linear interpolation between the values at integer points. 

By Lemma $2.3$ in \cite{AKV},  for $x \geq 1$, the rescaled process $\left(\dfrac{x+S'_{[nt]}}{\sigma' \sqrt{n}}, t \in [0, 1]\right)$ conditioning on the event $[ \tau^{S'}(x)>n]$ converges weakly on $C([0, 1], \mathbb{R})$ towards the Brownian meander.
In other words,  for any bounded Lipschitz continuous function $\psi: \mathbb{R} \to \mathbb{R}$ and any $t \in (0,1]$ and $x \geq 1$,
\begin{align}\label{one}
   \lim_{n\to +\infty} \mathbb{E}\left[\psi\left(\dfrac{x+S'_{[nt]}}{\sigma'\sqrt{n} }\right)\mid \tau^{S'}(x)>[nt]\right]=\dfrac{1}{t}  \int_0^{+\infty} \psi(u) u \exp\left({-\frac{u^2}{2t}}\right)du.
\end{align}

 Let us also state the Caravena-Chaumont's result about random bridges conditioned to stay positive in the discrete case. Roughly speaking,  as $n \to +\infty$, for any starting point $x \geq 1$ and any ending point $y \geq 1$, the random bridge of the random walk $S$, starting at $x$, ending  at $y$ at time $n$ and conditioned to stay positive until  time  $n$,  after a linear interpolation and a diffusive rescaling, converges in distribution on $C([0, 1], \mathbb{R})$ towards the normalized Brownian excursion $\mathcal{E}^+$: 
\[
\left(\left(\dfrac{S'_{[nt]}}{\sigma'\sqrt{n}}\right)_{t \in [0, 1]}\mid \tau^{S'}(x)>[nt], S'_n=y\right) \overset{\mathscr{L}}{\longrightarrow}\mathcal{E}^+, \quad \text{as } n\to  +\infty.
\]
More precisely, for any $x, y\geq 1, 0<s<t\leq 1$ and any bounded Lipschitz continuous function $\psi: \mathbb{R} \to \mathbb{R}$,
\begin{align} \label{two}
 \lim_{n\to +\infty} \mathbb{E}&\left[\psi\left(\dfrac{x+S'_{[ns]}}{\sigma'\sqrt{n} }\right)\mid \tau^{S'}(x)>[nt], x+S'_{[nt]}=y\right] \notag
 \\
 &  \hspace{3cm} =
   \int_0^{+\infty} 2\psi(u\sqrt{t})  \exp\left(-\dfrac{u^2}{2\frac{s}{t} \frac{t-s}{t}}\right) \frac{u^2}{\sqrt{2\pi \frac{s^3}{t^3} \frac{(t-s)^3}{t^3}}}du.
\end{align}

\section{Crossing times and renewal theory}
In order to analyse the asymptotic behavior of the process $\mathcal X$, we decompose $X_n$  as a sum of  successive excursions  in   $\mathbb Z^-$ or $\mathbb Z^+$. It is therefore interesting to introduce  the sequence ${\bf C}=(C_k)_{k \geq 0} $ of  ``crossing times'', i.e. times at which the process $\mathcal{X}$ changes its sign: more precisely,  $C_0=0$ and, for any $k \geq 0$, 
\begin{align}\label{crossing_time}
    C_{k+1} :=\left\{ 
    \begin{array}{lll}
 \vspace{2mm}
    \inf \{n > C_k \mid X_{C_k} + (\xi_{C_k + 1} +  \dotsi +\xi_{n}) \geq 0\} &\text{\rm if }  X_{C_k} \leq -1, \\
    \vspace{2mm}
C_k + 1 &\text{\rm if }  X_{C_k} =0,\\
 \vspace{2mm}
 \inf \{n > C_k \mid  X_{C_k} + (\xi'_{C_k + 1} + \dotsi + \xi'_{n}) \leq -1\} &\text{\rm if } X_{C_k} \geq 1. 
 \end{array}
 \right.
\end{align}
Under hypothesis {\bf H2},  the random times $C_k$ are $\mathbb{P}$-a.s. finite and   form a sequence of finite stopping times with respect to the canonical filtration $  (\sigma \left(\xi_{k}, \xi'_{k})\mid k \leq n \right)_{n \geq 1}$.

\subsection{On the crossing sub-process \texorpdfstring{$\mathcal X_{\bf C}$}{X}}\label{crossingprocess}
We denote $\mathcal X_{\bf C}:=(X_{C_k})_{k \geq 0}$ the {\bf crossing sub-process of $\mathcal X$}, which  plays an important role in this paper. 


 \begin{lem}  The sub-process $\mathcal X_{\bf C}$ is a  time-homogeneous Markov chain on $\mathbb{Z}$ with transition kernel 
$\mathcal C = (\mathcal C(x, y))_{x, y \in \mathbb Z}$ given by   
 \begin{equation} \label{kernel_paper02}
  \mathcal{C}(x, y) = \left\{
   \begin{array}{lll}
  \sum_{t = 0}^{-x-1} \mu_+(y-x-t)\, \mathcal{U}_+(t) &  \text{if } x \leq -1 \text{ and } y \geq 0,\\
 \alpha \mu(y) + (1-\alpha) \mu'(y) & \text{if } x=0 \text{ and } y\in \mathbb{Z},\\
  \sum_{t = -x+1}^{0} \mu'_-(y-x-t)\,\ \mathcal{U}'_-(t) & \text{if } x \geq 1 \text{ and } y \leq 0.
   \end{array}
    \right.
\end{equation}
 \end{lem}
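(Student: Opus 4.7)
My plan is to invoke the strong Markov property of $\mathcal X$ at the stopping time $C_k$ and then compute the one-step transition law of $\mathcal X_{\bf C}$ by separating the three regimes $x\leq -1$, $x=0$, $x\geq 1$ and identifying each transition with the distribution of the overshoot of a suitable one-dimensional random walk across the barrier $\{0\}$.

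First I will note that $\mathcal X$ is itself a time-homogeneous Markov chain on $\mathbb Z$: by construction (\ref{system}), given $X_n$, the law of $X_{n+1}$ depends only on the sign of $X_n$ and on a fresh increment drawn from $\mu$, $\mu'$ or $\alpha\mu+(1-\alpha)\mu'$. Since each $C_k$ is a $\mathbb P$-a.s. finite stopping time with respect to the filtration $\mathcal F_n:=\sigma(\xi_i,\xi'_i,B_i\mid i\leq n)$, the strong Markov property yields
$$
\mathbb E[\varphi(X_{C_{k+1}})\mid \mathcal F_{C_k}]\;=\;\mathbb E_{X_{C_k}}[\varphi(X_{C_1})]
$$
for every bounded measurable $\varphi:\mathbb Z\to\mathbb R$, which delivers at once both the Markov property and the time-homogeneity of $\mathcal X_{\bf C}$.

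Next I will compute $\mathcal C(x,y)=\mathbb P_x[X_{C_1}=y]$ case by case. The case $x=0$ is immediate from the definition of $C_1$ and of $\eta_1$: one has $C_1=1$ and $X_1=\eta_1$ has law $\alpha\mu+(1-\alpha)\mu'$. For $x\leq -1$ (the case $x\geq 1$ being perfectly symmetric upon exchanging $S,\mu_+,\mathcal U_+$ with $S',\mu'_-,\mathcal U'_-$), starting at $x$ the chain evolves as $x+S_n$ until the first entrance into $\mathbb Z^+\cup\{0\}$, so $X_{C_1}=x+S_\tau$ where $\tau$ is the first-passage time of $x+S_n$ into $\mathbb Z^+\cup\{0\}$. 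To recover the stated formula I will decompose this overshoot along the ascending ladder sequence $(S_{\ell_i})_{i\geq 0}$ of $S$: the first epoch at which $x+S_n$ reaches $\mathbb Z^+\cup\{0\}$ is necessarily an ascending ladder epoch $\ell_i$ for $S$, and writing $t:=S_{\ell_{i-1}}$ for the last ladder height strictly before this crossing, $t$ must lie in $\{0,\ldots,-x-1\}$ while the next ladder step, of law $\mu_+$, must equal $y-x-t$. Summing over $i\geq 1$, the total weight attached to each admissible $t$ is $\sum_{i\geq 0}\mathbb P[S_{\ell_i}=t]=\mathcal U_+(t)$, and one recovers
$$
\mathcal C(x,y)\;=\;\sum_{t=0}^{-x-1}\mathcal U_+(t)\,\mu_+(y-x-t).
$$

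The main (though mild) obstacle I anticipate is to make the ladder decomposition above fully rigorous, that is, to check that under hypothesis \textbf{H2} the events $\{\tau=\ell_i\}_{i\geq 1}$ partition the a.s.-full event $\{\tau<+\infty\}$ and that the joint law of the last ladder height $S_{\ell_{i-1}}\in\{0,\ldots,-x-1\}$ and of the ensuing ladder step factorises as $\mathcal U_+(t)\,\mu_+(\cdot)$. This is a standard consequence of the renewal structure of $(S_{\ell_i})_{i\geq 0}$ and of the a.s. finiteness of $\ell_1$ recalled earlier in the excerpt, combined with the independence between $S_{\ell_{i-1}}$ and $S_{\ell_i}-S_{\ell_{i-1}}$.
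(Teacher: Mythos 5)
Your proof is correct and follows essentially the same route as the paper: both arguments identify the first crossing time $C_1$ with an ascending ladder epoch of $S$, condition on the last ladder height $t=S_{\ell_{i-1}}\in\{0,\ldots,-x-1\}$ before the crossing, and use the i.i.d. renewal structure of the ladder sequence to factor the probability as $\mathcal U_+(t)\,\mu_+(y-x-t)$. The only cosmetic difference is that you spell out the strong Markov property at $C_k$ where the paper simply calls the Markov property of $\mathcal X_{\bf C}$ obvious.
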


 \begin{proof} 
 The Markov property is obvious from the above definition.\\
Now, we compute $\mathcal{C}(x, y)$ for any $x \leq  -1$ and $y \geq 0$ (other cases are similar) as follows. Noticing that the first crossing time $C_1$ belongs $\mathbb P$-a.s. to the set  $\{\ell_k\mid  k \geq 1\}$ and that the sequence $(S_{\ell_{k}})_{k \geq 1}$ is   increasing, we may write 
\begin{align*}
    \mathcal{C}(x, y) &= \sum_{ k \geq 1} \mathbb{P} [x+ S_{\ell_{k-1}}\leq -1,  x+ S_{\ell_{k}}\ = y] \\
     &= \displaystyle \sum_{k \geq 1} \displaystyle \sum_{t = 0}^{-x-1} \mathbb{P}[S_{\ell_{k-1}} = t] \, \mathbb{P}[S_{\ell_k}-S_{\ell_{k-1}} = y - x - t]\\
    &= \displaystyle \sum_{t =0}^{-x-1} \mathbb{P}[S_{\ell_1} = y - x - t]\, \displaystyle \sum_{i\geq 0}  \mathbb{P}[S_{\ell_i} = t]\\
    &= \displaystyle \sum_{t= 0}^{-x-1} \mu_+(y - x - t) \,  \mathcal{U}_+(t).
\end{align*} 
\end{proof}   
    When {\bf H2} holds, the crossing sub-process $\mathcal X_{\bf C}$ is well defined and it  is irreducible, aperiodic  and positive recurrent on its  unique essential class $\mathcal{I}_{\bf C}(X_0)$.  Notice that this essential class  can be a  proper subset of $\mathbb Z$; it occurs  for instance when the support of $\mu$ is bounded from above or the one of $\mu'$ is bounded from below. Nevertheless,  it admits a unique invariant probability measure $\nu$ supported by $\mathcal{I}_{\bf C}(X_0)$. Also in \cite{Vo}, the explicit expression of $\nu$ is  only known when $\alpha \in \{0, 1\}$ and the support of $\mu$ (resp. $\mu'$)  is included in $\mathbb Z^+$  (resp.  in  $ \mathbb{Z^-}$). However, the existence of $\nu$ is enough for our purpose regardless of its exact formula. 

Furthermore, by Theorem $2.2$ in \cite{Vo},  under hypothesis {\bf H3}, the oscillating random walk $\mathcal X$ is irreducible on $\mathbb Z$. It is also important to notice the following property. 
 \begin{property} \label{zaiugegfh}
 Under hypotheses {\bf H1}-- {\bf H3}, the oscillating random walk is null recurrent.  In other words, setting ${\bf t}_0:=\{k\geq 1 \mid  X_k=0\}$ then it holds
 $$\mathbb P_0[{\bf t}_0<+\infty]=1 \quad \text{ and } \quad \mathbb E_0[{\bf t}_0]=+\infty.$$ 
  \end{property}
\begin{proof}
    We may choose  $x_0, y_0\geq 1$ s.t. $\mu(-x_0)>0, \mu'(y_0)>0$ and write for any $n \geq 1$,  
  \begin{align*}
  \mathbb P_0[{\bf t}_0>n]&
  \geq  \mathbb P_0[{\bf t}_0 >n, \xi_1=-x_0]+\mathbb P_0[{\bf t}_0 >n, \xi'_1=y_0]
  \\
  &\geq  \alpha\mu(-x_0)  \mathbb P[ \ell_1 >n-1]+(1-\alpha)\mu'(y_0)  \mathbb P [ \ell'_1 >n-1].
  \end{align*}
 Hence $\mathbb E_0[{\bf t}_0]\geq  \alpha\mu(-x_0) (1+ \mathbb E[\ell_1])+ (1-\alpha)\mu'(y_0) (1+ \mathbb E[\ell'_1])=+\infty.$  
\end{proof}

 Another point to insist on here is that any excursion between two consecutive crossing times is uniquely governed by $S$ or $S'$;  thus, all the results obtained in the previous section can be applied. The decomposition technique that exploits this fact is classical and extremely efficient in controlling the varying excursions over time of Markov processes; for example, we use it in the last section to estimate the convergence of finite dimensional distribution. As a direct application, we can prove that the strong law of large numbers still holds for the chain $\mathcal{X}$.

 \begin{lem}
   Assume that $\mathbb{E}[|\xi_n|]+ \mathbb{E}[|\xi'_n|]< +\infty$   and $\mathbb{E}[\xi_n]= \mathbb{E}[\xi'_n]=0$.  Then, it holds
     $$\displaystyle \lim_{n \to +\infty}\frac{X_n}{n}=0 \,\,\,\,\mathbb{P}\text{-a.s.}$$ 
 \end{lem}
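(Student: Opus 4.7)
My plan is to recognize that $(X_n - X_0)_{n \geq 0}$ is a martingale with respect to the filtration $\mathcal{G}_n := \sigma((\xi_k, \xi'_k, B_k) : 1 \leq k \leq n)$ generated by the driving i.i.d.\ sequences, and to derive the claim from a classical martingale strong law of large numbers. Writing $J_i := X_i - X_{i-1}$, the recursion (\ref{system}) reads
$$J_i = \xi_i \mathbb{1}_{X_{i-1} \leq -1} + \eta_i \mathbb{1}_{X_{i-1} = 0} + \xi'_i \mathbb{1}_{X_{i-1} \geq 1}.$$
Since $X_{i-1}$ is $\mathcal{G}_{i-1}$-measurable while $(\xi_i, \xi'_i, B_i)$ is independent of $\mathcal{G}_{i-1}$ with $\mathbb{E}[\xi_i] = \mathbb{E}[\xi'_i] = \mathbb{E}[\eta_i] = 0$, we obtain $\mathbb{E}[J_i \mid \mathcal{G}_{i-1}] = 0$ on all three branches, so $(X_n - X_0)$ is indeed a $(\mathcal{G}_n)$-martingale. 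Moreover the increments are dominated: $|J_i| \leq D_i := |\xi_i| + |\xi'_i|$ with $(D_i)_{i \geq 1}$ i.i.d.\ and $\mathbb{E}[D_1] < +\infty$.

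To conclude $(X_n - X_0)/n \to 0$ almost surely, I would apply a standard martingale strong law. Under the paper's global assumption \textbf{H1} (finite variance), $\mathbb{E}[J_i^2] \leq \mathbb{E}[D_1^2]$ is uniformly bounded, hence $\sum_i \mathbb{E}[J_i^2]/i^2 < +\infty$ and the result follows from Doob's $L^2$ inequality (giving almost-sure convergence of $\sum_i J_i/i$) combined with Kronecker's lemma. Under only the first-moment hypothesis actually stated in the lemma, the classical truncation trick suffices: split $J_i = J_i \mathbb{1}_{D_i \leq i} + J_i \mathbb{1}_{D_i > i}$. Borel--Cantelli applied to $\sum_i \mathbb{P}(D_i > i) \leq \mathbb{E}[D_1] < +\infty$ kills the tail piece, while subtracting the conditional mean $a_i := \mathbb{E}[J_i \mathbb{1}_{D_i \leq i} \mid \mathcal{G}_{i-1}]$ produces centered differences $\tilde J_i$ with $\sum_i \mathbb{E}[\tilde J_i^2]/i^2 \leq \sum_i \mathbb{E}[D_1^2 \mathbb{1}_{D_1 \leq i}]/i^2 \leq C\, \mathbb{E}[D_1] < +\infty$ by Fubini; the compensator satisfies $|a_i| \leq \mathbb{E}[D_1 \mathbb{1}_{D_1 > i}] \to 0$ and hence has vanishing Cesaro average.

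The only non-routine point is the identification of $(X_n - X_0)$ as a martingale at the origin, which is automatic once one notes that the distribution $\alpha \mu + (1-\alpha)\mu'$ governing $\eta_n$ is centered regardless of $\alpha$; beyond that the argument is textbook. In the excursion-decomposition language favoured elsewhere in the paper, the same conclusion can be read off from the observation that between consecutive crossings $\mathcal X$ is a translate of a piece of the centered random walk $S$ or $S'$, but the martingale viewpoint bypasses the need to control excursion maxima (whose expectations are typically infinite under \textbf{H1}).
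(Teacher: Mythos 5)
Your proof is correct, but it takes a genuinely different route from the paper's. The paper decomposes $X_n$ according to sign, writes the positive part as $X_{C_{k(n)}} + (S'_n - S'_{C_{k(n)}})$ via the last crossing time $C_{k(n)}\le n$, bounds the entry value $X_{C_{k(n)}}$ by $\max\{X_0,\xi_{C_{k(n)}}\}$, and then lets the classical i.i.d.\ SLLN for $S$ and $S'$ (together with $\max_{i\le n}|\xi_i|/n\to 0$ a.s.) kill each term; the negative part is treated symmetrically. You instead note that the increment $J_i$ is drawn, conditionally on $\mathcal{G}_{i-1}$, from a centered law ($\mu$, $\mu'$, or $\alpha\mu+(1-\alpha)\mu'$) selected by the $\mathcal{G}_{i-1}$-measurable sign of $X_{i-1}$, so that $(X_n - X_0)$ is a $(\mathcal G_n)$-martingale with increments dominated by the i.i.d.\ sequence $D_i=|\xi_i|+|\xi_i'|$, and you then invoke the martingale strong law of large numbers --- immediately via $\sum_i \mathbb{E}[J_i^2]/i^2<\infty$ and Kronecker when \textbf{H1} is available, or under only the first-moment hypothesis of the lemma via the truncate-and-compensate argument, which you carry out correctly (Borel--Cantelli for the tail, $L^2$-summability of the centered truncated increments by Fubini, and Ces\`aro for the compensator $a_i\to 0$). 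The martingale route is arguably the cleaner one: it requires no reference to the crossing times $C_k$ or to $\mathcal X_{\bf C}$, and it bypasses the slightly informal control of $X_{C_{k(n)}}/n$ that the paper leaves implicit. The paper's route, on the other hand, is in keeping with the excursion decomposition that drives the rest of the article and makes visible that each excursion is simply a translate of $S$ or $S'$. Both arguments are valid under the stated hypotheses.
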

\begin{proof}
   We decompose $X_n$ as $X_n= X_n \mathbb{1}_{\{X_n \geq 1\}} + X_n \mathbb{1}_{\{X_n \leq -1\}}$. 
   
   Let us   estimate the first term.   For any $n \geq 1$, there exists a random integer $k(n) \geq 0$ such that $C_{k(n)} \leq n < C_{k(n)+1}$; notice that the condition 
   $X_n \geq 1$ yields $X_{C_{k(n)}} \geq 1$. Hence, we get
   \begin{align*}
      0 \leq \frac{X_n \mathbb{1}_{\{X_n \geq 1\}}}{n} &=\frac{X_{C_{k(n)}}+ S'_n-S'_{C_{k(n)}}}{n} \\
      &\leq\frac{\max\{X_0, \xi_{C_{k(n)}}\}}{n} +\frac{S'_n}{n}-\frac{S'_{C_{k(n)}}}{C_{k(n)}}\frac{C_{k(n)}}{n}\\
      &\leq\frac{\max\{X_0, \xi_{C_{k(n)}}\}}{n} + \frac{S'_n}{n} + \left\vert\frac{S'_{C_{k(n)}}}{C_{k(n)}} \right\vert.
    \end{align*}
   By the strong law of large numbers,  the different terms on the right-hand side above converges $\mathbb{P}\text{-a.s.}$ to $0$; so does $\frac{X_n \mathbb{1}_{\{X_n \geq 1\}}}{n}$. The second term is treated in the same way.
\end{proof}

\subsection{On aperiodic renewal sequences of operators}

Let 
 $(\mathbb{Z}^{\otimes \mathbb{N}}, (\mathcal{P}(\mathbb{Z}))^{\otimes \mathbb{N}}, \mathcal{X}, (\mathbb{P}_x)_{x \in \mathbb{Z}}, \theta)$ be the canonical space, i.e. the space of trajectories associated with the Markov chain  $\mathcal{X}$. For any $x \in \mathbb{Z}$, we probability measure  $\mathbb{P}_x$ is the conditional probability with respect to the event $[X_0=x]$, we denote by  $\mathbb{E}_x$ the corresponding conditional expectation. The operator $\theta$ is the classical shift transformation defined by  $\theta((x_k)_{k\geq 0})= (x_{k+1})_{k\geq 0} $ for any $(x_k)_{k\geq 0} \in \mathbb{Z}^{\otimes \mathbb{N}}$.

In this section, we study the behavior as $n \to +\infty$ of the sequence  
 $$
{H_n(x,y)}=  \sum_{k=1}^{+\infty}\mathbb{P}_x[C_k=n, X_n=y],
 $$ for any $x, y \in \mathbb Z$. Since the position at time $C_k$ may vary, so that the excursions of $\mathcal{X}$ between two successive crossing times  are not independent, it thus motivates us to take into account the long-term behaviours of these quantities and express them in terms of operators related to the crossing sub-process $\mathcal X_{\bf C}.$
For this purpose, we apply a general renewal theorem due to S. Gou{\"e}zel \cite{Gouezel}. This theorem relies on the  decomposition  of the operator $
 \mathcal C$ using  a sequence of operators  $(\mathcal C_n)_{n \geq 1}$  acting on some Banach space  and   that are not so difficult to deal with.

 It is natural in our context to deal with  the operators $\mathcal C_n= (\mathcal C_n(x, y))_{x, y \in \mathbb Z}, n \geq 1, $ defined by: for any $x, y \in \mathbb Z$ and any $n \geq 1$,
 $$\mathcal C_n(x,y):= \mathbb{P}_x[ C_1=n, X_n=y].$$
 
 The relation $\mathcal{C}(x,y)= \sum_{n \geq 1} \mathcal C_n(x, y)$ is straightforward. We also pay attention to the case $x=0$, that is $\mathcal C_1(0, y)= \mathbb{P}_0[X_1=y]= \alpha \mu(y) + (1-\alpha) \mu'(y)$ and $\mathcal C_n(0, y)=0$ if $n \geq 2$. 
 
 For a function $ \varphi: \mathbb Z \to \mathbb C$,  we formally set
 $$\mathcal C_n\varphi(x) := \displaystyle \sum_{y\in \mathbb{Z}:\, xy\leq 0} \mathcal C_n(x,y)\varphi(y)= \mathbb{E}_x[\varphi(X_n),  C_1=n] \quad \text{if } x \in \mathbb{Z}\setminus\{0\},$$
 and $\mathcal C_1 \varphi(0)= \displaystyle \sum_{y \in \mathbb{Z}}\mathcal C_1(0, y) \varphi(y)= \mathbb{E}_0[\varphi(X_1)]$ and $ \mathcal C_n \varphi(0)=0 \quad \text{if } n \geq 2 $. 
The quantity $\mathcal C_n\varphi(x) $ is well defined for instance when $\varphi \in   L^\infty(\mathbb{Z})$. Other  Banach spaces can be  considered; under  moment  assumptions, we describe below the action of  the $\mathcal C_n$ on a bigger Banach space ${\mathcal B_\delta}$,  more suitable to the situation as explained a little further on.

 Notice  that $\mathcal C_n(x, y)= \mathcal C_n \mathbb{1}_{\{y\}}(x)$ for any $x, y \in \mathbb Z$, which yields, by induction,  
\begin{align}\label{sigman}
    {H_n(x,y)} &=\displaystyle \sum_{k=1}^{+\infty} \mathbb{P}_x[C_k=n, X_n=y]\notag\\
    &=\displaystyle \sum_{k=1}^{+\infty} \displaystyle \sum_{j_1+\ldots+j_k=n}\mathbb{P}_x[  C_1=j_1, C_2- C_1=j_2, \ldots, C_k-C_{k-1}=j_k, X_n=y]\notag\\
    &=\displaystyle \sum_{k=1}^{+\infty} \displaystyle \sum_{j_1+\ldots+j_k=n}{\mathcal C}_{j_1}\ldots {\mathcal C}_{j_k} 1_{\{y\}}(x).
\end{align}

 As announced above, we apply a result of S. Gouezel, stated in a general framework \cite{Gouezel}, that of {\it aperiodic  renewal sequence of operator}, i.e. sequences  $(\mathcal C_n)_{n \geq 1}$  of operators acting on a  Banach space $({\mathcal B}, \vert \cdot \vert_{\mathcal B })$ and satisfying the following conditions: 

 $\bullet$ the operators $\mathcal C_n,  n \geq 1$, act  on ${\mathcal B }$  and $\displaystyle \sum_{n \geq 1} \Vert \mathcal  C_n\Vert_{\mathcal B } <+\infty$ (where $\Vert\cdot \Vert_{\mathcal B }$ denotes the norm on the space $\mathcal L({\mathcal B })$ of continuous operators on $({\mathcal B },  \vert \cdot \vert _{\mathcal B }$));

 $\bullet$ the operator   $\displaystyle \mathcal C(z):= \sum_{n \geq 1} z^n \mathcal C_n$,   defined for  any $z \in \overline{\mathbb{D}}$,  satisfies  

 \begin{enumerate}[R1-]
\item   $\mathcal C(1)$ has a simple eigenvalue at 1 (with corresponding eigenprojector $\Pi$) and the rest of its spectrum is contained in a disk of radius $<1$;

\item for any complex number $z \in \overline{\mathbb{D}} \setminus \{1\}$, the spectral radius of $\mathcal C(z)$ is $<1$; 

\item  for any $n \geq 1$, the real number $r_n$ defined by $\Pi \mathcal{C}_n \Pi= r_n \Pi$ is $\geq 0$.   
\end{enumerate}

Condition R2 implies that,  for any $ z \in \overline{\mathbb D}\setminus \{1\}$, the operator $I-\mathcal  C(z)$ is invertible on ${\mathcal B }$  and
\[
(I-\mathcal  C(z))^{-1}= \sum_{k \geq 0} \mathcal{C}(z)^k= \sum_{k \geq 0} \left(\sum_{j \geq 1}\mathcal C_jz^j\right)^k=\sum_{n \geq 0}   {\mathcal H}_n z^n
\]
with    $  {\mathcal H} _0= I$ and $\displaystyle   {\mathcal H} _n= \sum_{k=1}^{+\infty} \sum_{j_1+ \ldots + j_k=n}{\mathcal C}_{j_1}\ldots {\mathcal C}_{j_k}.
$ The above identity, called the {\it renewal equation},   is of fundamental importance to understand the asymptotics of $H_n$ in the non-commutative setting;   in particular, the equality (\ref{sigman}) yields $H_n(x, y)=   {\mathcal H}_n \mathbb 1_{\{y\}}(x)$ so that the asymptotic behaviour  of $(H_n(x, y))_{n \geq 1}$ is related to that  of $(\mathcal H_n)_{n \geq 1}$.

By  \cite{Gouezel}, if  the sequence $(\mathcal C_n)_{n \geq 1 }$ satisfies the following additive assumptions

\noindent R4($\ell,\beta).    \quad \displaystyle     \Vert \mathcal C_n\Vert_{{\mathcal B }} \leq C \frac{\ell(n)}{n^{1+\beta}}$,

\noindent R5($\ell,\beta). \quad \displaystyle      \sum_{j>n}r_j \thicksim \frac{\ell(n)}{n^\beta}$,

\noindent where $C>0,\ \beta \in (0, 1)$ and   $\ell$ is a slowly varying function, 
then  the sequence $(n^{1-\beta} \ell(n) {\mathcal H}_n)_{n\geq 1}$ converges in $(\mathcal L({{\mathcal B }}), \Vert \cdot \Vert_{\mathcal B })$ to the operator $d_\beta \Pi$, with $d_\beta = \frac{1}{\pi}\sin \beta \pi$.

 In the next subsection, we introduce some  Banach space  $\mathcal B={\mathcal B_\delta}$  in order to be able to apply this general result.

\subsection{Spectral property of the transition matrix \texorpdfstring{$\mathcal{C}=(\mathcal{C}(x,y))_{x,y \in \mathbb{Z}}$}{C} } 
The operator  $\mathcal{C}$ acts on the space  $L^\infty(\mathbb{Z})$ of bounded functions on $\mathbb{Z}$.  By the following lemma, it satisfies some strong spectral property on this space.
 \begin{lem} \label{Doeblin} Assume {\bf H1}-- {\bf H3} hold. Then, the infinite matrix $\mathcal{C}$ satisfies the Doeblin condition and therefore, it is a quasi-compact operator on $L^\infty(\mathbb{Z})$, the space of bounded functions on $\mathbb{Z}$. Furthermore, the eigenvalue
$1$ is simple, with associated eigenvector $\mathbf{1}$, and the rest of the spectrum is included in
a disk of radius $<1$.
\end{lem}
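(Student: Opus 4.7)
The plan is to verify a Doeblin-type minorization for $\mathcal C$ on $L^\infty(\mathbb{Z})$, namely
\[
\mathcal C^{n_0}(x,y_0)\ \geq\ \varepsilon\qquad\text{for every }x\in\mathbb{Z},
\]
for some fixed $y_0\in\mathbb{Z}$, integer $n_0\geq 1$ and $\varepsilon>0$, and then to invoke the classical Doeblin--Doob theorem: such a minorization yields the geometric decomposition $\mathcal C^{n_0 k}=\Pi+R_k$ with $\Pi$ of rank one and $\|R_k\|_{\infty\to\infty}\leq (1-\varepsilon)^k$, which is exactly quasi-compactness of $\mathcal C$, together with a simple top eigenvalue $1$ (eigenvector $\mathbf{1}$) and rest of the spectrum inside a disk of radius $<1$.

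The main technical step is the \emph{uniform tightness} of the one-step transitions, i.e. the existence of a finite set $K\subset\mathbb{Z}$ and $\varepsilon_1>0$ with $\inf_{x\in\mathbb{Z}}\mathcal C(x,K)\geq \varepsilon_1$. For $x\leq -1$, the explicit formula (\ref{kernel_paper02}) identifies $\mathcal C(x,\cdot)$ with the distribution of the overshoot $x+S_{\tau^S(-x)}$ of $S$ at its first passage above $0$. Under $\mathbf{H1}$, $\mathbb E[S_{\ell_1}]<\infty$, and a classical renewal-theoretic result asserts that this overshoot converges weakly, as $-x\to\infty$, to the proper probability distribution $\pi_+(y)=\mathbb P[S_{\ell_1}>y]/\mathbb E[S_{\ell_1}]$; consequently the family $\{\mathcal C(x,\cdot)\}_{x\leq -1}$ is tight on $\mathbb N_0$. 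A symmetric argument via $S'$ handles $x\geq 1$, and at $x=0$ the single distribution $\alpha\mu+(1-\alpha)\mu'$ is trivially tight. Taking $K=[-M,M]$ for $M$ large enough produces the desired uniform bound.

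To close the Doeblin estimate, pick $y_0\in K$ belonging to the (unique) essential class $\mathcal I_{\bf C}$ of the crossing chain. Since $\mathcal X_{\bf C}$ is irreducible and aperiodic on $\mathcal I_{\bf C}$ and $K$ is finite, there exist $n_1\geq 1$ and $\varepsilon_2>0$ with $\mathcal C^{n_1}(z,y_0)\geq \varepsilon_2$ for every $z\in K$ (transient $z\in K$ being handled by the positive probability of entering $\mathcal I_{\bf C}$ in finitely many steps and then reaching $y_0$, after a possible enlargement of $n_1$). Chapman--Kolmogorov combined with the tightness bound then gives
\[
\mathcal C^{n_1+1}(x,y_0)\ \geq\ \sum_{z\in K}\mathcal C(x,z)\,\mathcal C^{n_1}(z,y_0)\ \geq\ \varepsilon_1\varepsilon_2
\]
uniformly in $x\in\mathbb{Z}$, completing the Doeblin minorization with $\nu=\delta_{y_0}$.

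The crux is the tightness upgrade in the middle paragraph: one must translate the pointwise weak convergence of overshoots into a genuinely $x$-uniform bound $\mathcal C(x,K)\geq \varepsilon_1$. This is automatic from Prohorov's theorem (a sequence of probability measures converging weakly to a proper limit is automatically tight) and can alternatively be obtained by a direct uniform first-moment bound on the overshoot, available under the moment hypotheses of the paper. Everything else is standard Markov-chain bookkeeping and the textbook spectral consequences of a minorization.
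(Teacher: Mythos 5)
The proof is correct but follows a genuinely different route from the paper's. The paper gets a \emph{one-step} Doeblin minorization, and very directly: using the renewal theorem for the ladder-height potential, $\lim_{t\to\infty}\mathcal U_+(t)=1/\mathbb E[S_{\ell_1}]>0$, one deduces $\delta_+:=\inf_{z\geq 0}\mathcal U_+(z)>0$; keeping only the boundary term $t=-x-1$ in the explicit kernel (\ref{kernel_paper02}) then yields $\mathcal C(x,y)\geq\delta_+\,\mu_+(y+1)$ for all $x\leq-1$, and analogously $\mathcal C(x,y)\geq\delta'\,\mu'_-(y-1)$ for all $x\geq1$; from this a single minorizing measure $\mathbf m$ and $\delta_0>0$ with $\mathcal C(x,\cdot)\geq\delta_0\mathbf m(\cdot)$ are produced and the spectral consequences are textbook. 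You instead construct an $n_0$-step minorization $\mathcal C^{n_0}(x,\cdot)\geq\varepsilon\delta_{y_0}(\cdot)$ by combining (a) a uniform tightness estimate $\inf_x\mathcal C(x,K)\geq\varepsilon_1$ over a finite $K$ -- obtained from weak convergence of the overshoot as $-x\to\infty$ (Prohorov), or equivalently a uniform first-moment bound on the overshoot -- with (b) a finite-state irreducibility/aperiodicity argument on $K$ to funnel everything to a fixed $y_0$ in the essential class in $n_1$ steps. Both roads lean on the same renewal-theoretic input (finite mean ladder height, aperiodicity of $\mu_+$ and $\mu'_-$), but the paper's use of the explicit kernel formula makes the minorizing measure explicit and one-step, which is both shorter and entirely elementary; your version is more robust in that it does not need to produce an explicit minorizing measure -- indeed it sidesteps the somewhat delicate issue (glossed by the paper's ``easy to show'') of finding a single $\mathbf m$ dominated simultaneously by the one-step kernels from $\mathbb Z^-$, $\{0\}$ and $\mathbb Z^+$, which are supported on essentially disjoint half-lines -- at the cost of a Chapman--Kolmogorov step and a finite-state irreducibility argument. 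One small remark: your multi-step minorization directly yields quasi-compactness of $\mathcal C^{n_0}$; to conclude for $\mathcal C$ itself (as the lemma states) you should add the routine observation that a power of $\mathcal C$ being quasi-compact with simple dominant eigenvalue $1$ forces the same for $\mathcal C$.
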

 \begin{proof}
Under the above assumptions,  the positive random variable $S_{\ell_1}$ has finite first moment; hence, by the renewal theorem,  
 $$\displaystyle \lim_{t\to +\infty}  \mathcal{U}_+(t)=\frac{1}{\mathbb{E}[S_{\ell_1}]}>0.$$ 
The above convergence readily implies $\delta_+:=\displaystyle \inf_{z\in \mathbb{Z^+}}{\mathcal U}_+(z)>0.$ 
 
 Consequently, by (\ref{kernel_paper02}), for any $x \leq -1$ and $y\geq 0$,
 $$\mathcal{C}(x,y) \geq \mu_+(y+1)\,  \mathcal{U}_+(-x-1)\geq \delta_+  \mu_+(y+1).$$
 In the same vein, one gets $\mathcal{C}(x,y) \geq \delta' \mu'_-(y-1)$ for any $x\geq 1$ and $y\leq 0$ with $\delta':=\inf_{z \in \mathbb{Z^-}} \mathcal{U'}_-(z)>0$. Hence, it is easy to show that there exists a probability measure $\mathbf{m}$ and $\delta_0>0$ s.t. for any $x\in \mathbb Z$,
$$\mathcal{C}(x,.)\geq \delta_0 \mathbf{m}(.),$$
which immediately implies the quasi-compactness of $\mathcal{C}$. The control of the peripheral spectrum   readily follows.
  \end{proof}
 

Thanks to this lemma, we could believe that  hypothesis R1 is satisfied by  the sequence $(\mathcal C_n)_{n \geq 1}$  acting on $L^\infty(\mathbb Z)$ since $\mathcal C(1) = \mathcal C$. Unfortunately, it holds $\displaystyle \sum_{n \geq 1} \vert \mathcal C_n\vert_\infty =+\infty$. Indeed,  it holds $\vert \mathcal C_n\vert_\infty = \displaystyle \sup_{x \in \mathbb Z} \mathbb P_x[C_1=n]$; now, if we assume for instance $x \leq   1$, it holds $\mathbb P_{x}[C_1=n]= \mathbb P[\tau^S(x)=n] $ with 
 
 $(i)  \quad \mathbb P[\tau^S(x)=n] = O(1/n),\ \quad $  
 
\noindent  and
 
  $(ii) \quad \displaystyle \liminf_{n \to +\infty} n\mathbb P[\tau^S(x_n)=n] >0$  when $\  x_n\asymp \sqrt{n}\ 
.$

\noindent (see  Lemma 5 and Theorem (B) \cite{Eppel}).
  Consequently $\vert \mathcal C_n\vert_\infty \asymp 1/n$.

 Thus, we have to choose  another Banach space ${\mathcal B_\delta}$.
  By (\ref{crossing_time}), it is clear that $C_{k+1}= \tau_S(X_{C_k})$ when  $X_{C_k}\leq -1$ and $C_{k+1}= \tau_{S'}(X_{C_k})$ when  $X_{C_k}\geq  1$. Consequently,  the  behaviour as $n \to +\infty$ of  the  $k^{\rm th}$-term $\mathbb{P}_x[C_k=n, X_n=y]$ of the sum $\Sigma_n(x,y)$  is  closely related to the distributions of $\tau_S$ and $\tau_{S'}$; in particular, by Lemma \ref{equal}, its dependence on   $y$ is  expressed in terms of $h_a(y)$ and $h'_d(y)$. This explains why  we have to choose a Banach space on which the action of $\mathcal C$ has ``nice'' spectral properties - as compacity or quasi-compacity - and also    does contain these functions $h_a$ and $h'_d$. The fact that they   are both sublinear leads us  to examine the action of $\mathcal C$ on  the space ${\mathcal B_\delta}$ of complex valued functions on $\mathbb Z$ defined by  

$$  {\mathcal B_\delta} :=\left\{f:\mathbb{Z} \to \mathbb{C}: \vert f \vert_{\mathcal B_\delta}:=\sup_{x\in \mathbb{Z}}\frac{|f(x)|}{1+ |x|^{1+\delta}} <+\infty \right\},$$
with $\delta \geq 0$. 

By Lemma \ref{upperbound} and the fact that $h_a(x)=O(x), \ h'_d(x)=O(x)$, the functions $h_a, h'_d, {\bf h}_n: x\mapsto \sqrt{n} \mathbb P[\tau^S(-x)>n] $ and  ${\bf h}'_n: x\mapsto \sqrt{n} \mathbb P[\tau^{S'}(x)>n]$ do belong to ${\mathcal B_\delta}$ for any $\delta\geq 0$;  furthermore, applying Lemma \ref{equal}, the sequence $({\bf h}_n)_{n \geq 0}$ (resp. $( {\bf h}'_n )_{n \geq 0}$) converges to $2 c h_a$ (resp. $2 c'h'_d$) in ${\mathcal B_\delta}$ if  $\delta>0$. This last property is of interest  in applying Gou\"ezel's renewal theorem and  for this reason,  we assume from now on  $\delta >0$.

 Furthermore, the map  $\mathcal{C}$  acts on ${\mathcal B_\delta}$ as a compact operator whose spectrum   can be controlled as follows.

\begin{prop} \label{first} Assume that hypotheses {\bf H1}-- {\bf H4} hold. Then,     
\begin{enumerate}[(i) ]
    \item The map $\mathcal{C}$ acts on ${{\mathcal B_\delta}}$ and $\mathcal{C}({{\mathcal B_\delta}}) \subset L^\infty(\mathbb{Z})$.
    \item  $\mathcal{C}$ is a compact operator on ${{\mathcal B_\delta}}$ with spectral radius $\rho_{\mathcal B_\delta}  =1$ and with the unique and simple dominant eigenvalue $1$. 
    \item The rest of  the spectrum of $\mathcal C$   on ${\mathcal B_\delta}$ is contained in a disk of radius $<1$.
\end{enumerate}
 Consequently, the operator $\mathcal C$ on ${\mathcal B_\delta}$ may be decomposed as
\begin{align*} 
\mathcal C= \Pi + Q
\end{align*}
where 
\\
\indent $\bullet$ \ \ $\Pi$ is the eigenprojector from ${\mathcal B_\delta}$ to $\mathbb C \bf{1}$ corresponding to the eigenvalue $1$ and $\Pi(\phi)= \nu(\phi) \mathbf{1}$, where $\nu$ is the unique $\mathcal{C}$-invariant probability measure on $\mathbb Z$;

\indent $\bullet$ \ \ the spectral radius of $Q$  on ${\mathcal B_\delta}$ is $<1$; 

 $\bullet$ \ \ $\Pi Q= Q\Pi = 0$.
\end{prop}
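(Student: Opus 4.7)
\noindent\textbf{Proof plan for Proposition \ref{first}.} The strategy is to prove (i) by a direct moment computation on the kernel (\ref{kernel_paper02}), deduce compactness of $\mathcal C$ on ${\mathcal B_\delta}$ by a diagonal extraction argument exploiting the $L^\infty$-smoothing from (i), transfer the spectral description from $L^\infty$ (where Lemma~\ref{Doeblin} applies) to ${\mathcal B_\delta}$, and finally read off the decomposition $\mathcal C=\Pi+Q$ from the Riesz spectral projection associated with the isolated simple eigenvalue $1$.

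To establish (i), consider the case $x\leq -1$; the case $x\geq 1$ is symmetric and the value at $x=0$ is trivial. Substituting $z:=y-x-t$ and $m:=-x-t$ in the explicit formula (\ref{kernel_paper02}), and bounding $\mathcal U_+(\cdot)$ by the uniform constant $C:=\sup_{k\geq 0}\mathcal U_+(k)<+\infty$ (finite since $\mathcal U_+(k)\to 1/\mathbb{E}[S_{\ell_1}]$ by the renewal theorem), one obtains
\[
\sum_{y\geq 0}(1+y^{1+\delta})\,\mathcal C(x,y)\;\leq\; 1+C\sum_{z\geq 1} z^{2+\delta}\mu_+(z)\;=\;1+C\,\mathbb{E}\bigl[S_{\ell_1}^{2+\delta}\bigr].
\]
The classical equivalence $\mathbb{E}[S_{\ell_1}^{r}]<+\infty\iff \mathbb{E}[(\xi_1^+)^{r+1}]<+\infty$, combined with hypothesis \textbf{H4}, gives the finiteness of $\mathbb{E}[S_{\ell_1}^{2+\delta}]$ (the symmetric bound for $x\geq 1$ relying instead on $\mathbb{E}[(\xi'^-_1)^{3+\delta}]<+\infty$). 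Hence $|\mathcal C f|_\infty\leq K\,|f|_{\mathcal B_\delta}$ and $\mathcal C({\mathcal B_\delta})\subset L^\infty\subset {\mathcal B_\delta}$ with continuous embedding.

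For compactness, given a sequence $(f_n)$ in the closed unit ball of ${\mathcal B_\delta}$, the pointwise bound $|f_n(y)|\leq 1+|y|^{1+\delta}$ permits, by a diagonal extraction, a pointwise limit $f\in{\mathcal B_\delta}$ with $|f|_{\mathcal B_\delta}\leq 1$. Dominated convergence (dominator finite by (i)) gives $\mathcal C f_n(x)\to \mathcal C f(x)$ at every $x$, while (i) also yields the uniform bound $|\mathcal C f_n-\mathcal C f|_\infty\leq 2K$. Splitting the supremum defining the ${\mathcal B_\delta}$-norm into $|x|\leq R$ (a finite set, on which uniform convergence follows from the pointwise one) and $|x|>R$ (where the ratio is bounded by $2K/(1+R^{1+\delta})$) and letting $R\to+\infty$ yields $|\mathcal C f_n-\mathcal C f|_{\mathcal B_\delta}\to 0$. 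For the spectrum, any nonzero eigenpair $(\lambda,f)$ of $\mathcal C$ on ${\mathcal B_\delta}$ obeys $f=\mathcal C f/\lambda\in L^\infty$ thanks to (i); conversely $L^\infty\subset {\mathcal B_\delta}$. Hence the nonzero spectra of $\mathcal C$ on the two spaces coincide, and Lemma~\ref{Doeblin} immediately provides (ii) and (iii).

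The decomposition then follows from the Riesz spectral projection $\Pi$ at the isolated simple eigenvalue $1$: it is one-dimensional with range $\mathbb C\mathbf{1}$, so $\Pi(\phi)=\ell(\phi)\mathbf{1}$ for some continuous linear form $\ell$ on ${\mathcal B_\delta}$, and $Q:=\mathcal C-\Pi$ satisfies $\Pi Q=Q\Pi=0$ with spectral radius $<1$. To identify $\ell=\nu$, iterate: $\mathcal C^n\phi=\ell(\phi)\mathbf{1}+Q^n\phi\to \ell(\phi)\mathbf{1}$ uniformly in $x$, whereas $\mathcal C^n\phi(x)=\mathbb{E}_x[\phi(X_{C_n})]\to \nu(\phi)$ by the ergodic theorem for the aperiodic positive-recurrent chain $\mathcal X_{\mathbf C}$. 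The main technical obstacle is precisely the uniform overshoot moment estimate of step (i): this is the only place where the $(3+\delta)$-moment hypothesis \textbf{H4} enters; the remainder of the argument is standard abstract spectral theory of compact operators.
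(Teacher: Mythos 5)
Your proof is correct and reaches the same conclusion, but it diverges from the paper on the spectral steps in an interesting way. For $(i)$ both arguments perform the same moment computation on the explicit kernel (\ref{kernel_paper02}) — you work with the weighted row sums $\sum_y(1+y^{1+\delta})\mathcal C(x,y)$ using $\sup_k\mathcal U_+(k)<\infty$, while the paper tests $\mathcal C$ against $\varphi\in\mathcal B_\delta$ and uses $\mathcal U'_-(t)\le 1$; both reduce to $\mathbb E[S_{\ell_1}^{2+\delta}]<\infty$ via \cite{ChowLai} and \textbf{H4}. Your compactness argument (diagonal extraction plus splitting the sup at radius $R$) is exactly the explicit version of the paper's one-line observation that $L^\infty\hookrightarrow\mathcal B_\delta$ is compact when $\delta>0$, so $\mathcal C=i\circ(\mathcal C:\mathcal B_\delta\to L^\infty)$ is compact. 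Where you genuinely depart from the paper is in $(ii)$–$(iii)$: the paper reruns a Perron--Frobenius/peripheral-spectrum analysis directly on $\mathcal B_\delta$ (bounded powers, then the convexity trick showing a peripheral eigenfunction $\psi$ has constant modulus, then forcing $e^{i\theta}=1$ and finally $\psi$ constant by Liouville). You instead observe that, because $\mathcal C(\mathcal B_\delta)\subset L^\infty$, any (generalized) eigenfunction of $\mathcal C$ on $\mathcal B_\delta$ for a nonzero eigenvalue automatically lies in $L^\infty$, so the nonzero point spectrum and its algebraic multiplicities on $\mathcal B_\delta$ are inherited from $L^\infty$, and you then import Lemma~\ref{Doeblin}. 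That transfer is valid (since $\mathcal C$ on $\mathcal B_\delta$ is compact, its nonzero spectrum consists of eigenvalues that can only accumulate at $0$, so the bound from $L^\infty$ immediately confines the non-dominant part to a disk of radius $<1$), and it avoids repeating the Perron argument — a genuine economy. Be aware of two small imprecisions worth tightening if you write this up: the nonzero \emph{spectra} on the two spaces do not literally coincide — $\mathcal C$ is merely quasi-compact on $L^\infty$ and may carry essential spectrum there — only the nonzero \emph{point spectra} (with generalized eigenspaces) agree, which is what you actually need; and your identification $\ell=\nu$ via $\mathcal C^n\phi(x)\to\nu(\phi)$ is immediate for bounded $\phi$ but requires a short truncation/positivity step to extend from $L^\infty$ to all of $\mathcal B_\delta$, as $\nu$'s integrability of $1+|x|^{1+\delta}$ is itself part of what must be deduced from the continuity of $\Pi$.
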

\begin{proof}
 \begin{enumerate}[i)]
     \item Note that $ \mathcal{U'}_-(t)=\displaystyle \sum_{n\geq 0} \mathbb{P}[S'_{\ell'_n}=t] = \mathbb{P}[\exists n\geq 0: S'_{\ell'_n}=t] \leq 1$. For any $\varphi \in {{\mathcal B_\delta}}$ and $x\geq 1$, we have  
     \begin{align*}
         |\mathcal{C}\varphi(x)| &\leq \displaystyle \sum_{y\leq 0} \displaystyle \sum_{t = -x+1}^{0} \mu'_-(y-x-t)\, \vert  \varphi(y)|\\
         &\leq  \vert  \varphi \vert_{\mathcal B_\delta}\displaystyle \sum_{y\leq 0}(1+\vert y\vert^{1+\delta} )\, \mu'_-(-\infty, y-1)\\ 
         &\leq \vert  \varphi \vert_{\mathcal B_\delta} \left(\mathbb{E}[\vert S'_{\ell'_1}] + \mathbb{E}[\vert S'_{\ell'_1} \vert^{2+\delta}]\right),
     \end{align*}
     which is finite if  $\mathbb{E}[(\xi'^-_n)^{3+\delta}]<+\infty$ (see \cite{ChowLai}). Other cases can be estimated in the same way and  yield
     \begin{align} \label{bound}
         |\mathcal{C}\varphi \vert_{\mathcal B_\delta} \leq \vert \mathcal{C}\varphi\vert _\infty  \leq  \vert  \varphi \vert_{\mathcal B_\delta} \left(\mathbb{E}[\vert S'_{\ell'_1}] +\mathbb{E}[\vert S'_{\ell'_1} \vert^{2+\delta}]\right) <+\infty.
     \end{align} 
     \item By \eqref{bound}, the operator $\mathcal{C}$ acts continuously from  ${{\mathcal B_\delta}}$ into $L^\infty(\mathbb{Z})$; since the inclusion map $i: L^\infty(\mathbb{Z}) \hookrightarrow {{\mathcal B_\delta}}$ is compact, the operator $\mathcal{C}$ is also compact on ${{\mathcal B_\delta}}$. 
     
        Let us now compute the spectral radius $ \rho_{\mathcal B_\delta}$ of $\mathcal{C}$. The fact that $\mathcal{C}$ is a stochastic matrix yields $ \rho_{\mathcal B_\delta} \geq 1$. To prove $ \rho_{\mathcal B_\delta}\leq 1$,  it suffices to show that $\mathcal{C}$ has bounded powers on ${{\mathcal B_\delta}}$. For any $n \geq 1$ and $x \in \mathbb{Z}$,
    \begin{align*}
        \vert \mathcal{C}^{n}\varphi(x) \vert \leq \displaystyle \sum_{y\in \mathbb{Z}} \mathcal{C}^{n-1}(x,y) \vert \mathcal{C}\varphi(y) \vert \leq \vert \mathcal{C}\varphi \vert_{\infty} \displaystyle \sum_{y\in \mathbb{Z}} \mathcal{C}^{n-1}(x,y) =\vert \mathcal{C}\varphi \vert_{\infty}.
    \end{align*}
Together with \eqref{bound}, it implies
    \begin{align*}
        \vert \mathcal{C}^{n} \varphi \vert_{\mathcal B_\delta}  \leq \vert \mathcal{C}^n \varphi \vert _\infty \leq |\mathcal{C}\varphi|_\infty \leq \vert  \varphi\vert _{\mathcal B_\delta}\left(\mathbb{E}[\vert S'_{\ell'_1}] +\frac{1}{2}\mathbb{E}[\vert S'_{\ell'_1} \vert^{2+\delta}]\right).
    \end{align*}
Hence $\Vert \mathcal C^n\Vert_{\mathcal B_\delta}\leq \mathbb{E}[\vert S'_{\ell'_1}] + \mathbb{E}[\vert S'_{\ell'_1} \vert^{2+\delta}]$ for any $n \geq 1$ and  
 $ \rho_{\mathcal B_\delta}= \displaystyle \lim_{n\to +\infty} \Vert \mathcal{C}^n \Vert_{\mathcal B_\delta}^{1/n}  \leq 1.$

Let us now control the peripheral spectrum of $\mathcal C$. Let $\theta \in \mathbb{R}$ and $\psi \in {{\mathcal B_\delta}}$ such that $\mathcal{C}\psi = e^{i\theta}\psi$. Obviously, the function $\psi$ is bounded and $\vert \psi \vert \leq \mathcal{C} \vert \psi \vert$.  Consequently,  $\vert \psi \vert_{\infty} -\vert \psi \vert$ is  non-negative and  super-harmonic    (i.e. $\mathcal{C}(\vert \psi \vert_{\infty} -\vert \psi \vert) \leq \vert \psi \vert_{\infty} -\vert \psi \vert$)  on the unique irreducible class  $\mathcal{I}_{\bf C}(X_0)$ of $\mathcal X$. By the classical denumerable Markov chains theory, it is thus constant on $\mathcal{I}_{\bf C}(X_0)$ which follows that $\vert \psi\vert$ is constant on $\mathcal{I}_{\bf C}(X_0)$.   

 \quad Without loss of generality, we may assume $\vert \psi(x) \vert =1$ for any $x \in \mathcal{I}_{\bf C}(X_0)$, i.e.  $\psi(x) = e^{i\phi(x)}$ for some $\phi(x) \in \mathbb{R}$. We may rewrite the equality $\mathcal{C}\psi = e^{i\theta}\psi$ as 
    \begin{align*}
     \forall x \in \mathcal{I}_{\bf C}(X_0) \quad  \sum_{y \in {\mathcal{I}_{\bf C}}(X_0)} \mathcal{C}(x,y) e^{i(\phi(y) - \phi(x))} = e^{i\theta}.
    \end{align*}
   Note that $\mathcal{C}(x,y) >0$ for all $x, y \in \mathcal{I}_{\bf C}(X_0)$;   by convexity, one readily gets $e^{i\theta} =e^{i(\phi(y) - \phi(x))}$ for such points $x, y$. Taking $x = y\in   \mathcal{I}_{\bf C}(X_0)$, we thus  obtain $e^{i\theta}=1$.

   In particular, the function $\psi$ is harmonic on $\mathcal{I}_{\bf C}(X_0)$, hence constant on this set, by \textit{Liouville's theorem}. Furthermore,  for any $x \in \mathbb Z$, it holds   $\mathcal{C}(x,y)>0 \Longleftrightarrow y \in \mathcal{I}_{\bf C}(X_0)$ ; consequently, for  any  fixed $y_0    \in \mathcal{I}_{\bf C}(X_0)$ and   any $x \in \mathbb Z$, 
   \begin{align*}
       \psi(x)=\mathcal{C}\psi(x)= \displaystyle \sum_{y \in {\mathcal{I}_{\bf C}}(X_0)} \mathcal{C}(x,y)\psi(y)= \psi(y_0). 
   \end{align*}
Therefore, the function $\psi$ is constant on $\mathbb{Z}$.     
   \item This is a direct consequence of (ii). 
 \end{enumerate}
\end{proof} 

 \subsection{A renewal limit theorem for the sequence of crossing times}
The main goal of this part is to prove the following statement.
 \begin{prop} \label{convergeH} 
 The sequence $(\sqrt{n} \mathcal H_n)_{n \geq 1}$ converges in $(\mathcal L({\mathcal B_\delta}), \Vert \cdot \Vert_{\mathcal B_\delta})$ to   the operator ${\bf c}^{-1}\Pi$ with 
 $\displaystyle  {\bf c}={2\pi\bigl(c\nu( \check{h}_a)+c'\nu(h'_d)\bigr)}$. In particular, 
for any $x, y \in \mathbb Z$,
 \[\lim_{n \to +\infty} \sqrt{n} H_n(x, y)=\frac{\nu(y)}{2\pi\bigl(c\nu( \check{h}_a)+c'\nu(h'_d)\bigr)}.
 \]
\end{prop}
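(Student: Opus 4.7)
The plan is to apply Gou\"ezel's renewal theorem to the sequence $(\mathcal C_n)_{n\geq 1}$ on the Banach space $\mathcal B_\delta$, with $\beta = 1/2$ and a constant slowly varying function $\ell$. Once conditions R1--R5 are verified, the theorem gives $\sqrt n\,\ell(n)\,\mathcal H_n \to d_{1/2}\,\Pi$ in operator norm, with $d_{1/2}=1/\pi$; identification of $\ell \equiv 2(c\nu(\check h_a)+c'\nu(h'_d))$ then yields $\sqrt n\,\mathcal H_n \to \mathbf c^{-1}\Pi$. The pointwise statement follows by applying both sides to $\mathbb 1_{\{y\}}\in\mathcal B_\delta$ and evaluating at $x$, since point-evaluation is a bounded linear functional on $\mathcal B_\delta$.

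Condition R1 is Proposition \ref{first}, while R3 reduces to positivity: $r_n = \nu(\mathcal C_n\mathbf 1) = \int\mathbb P_x[C_1=n]\,d\nu(x)\geq 0$ since every $\mathcal C_n$ has nonnegative entries. For R2 and $|z|<1$, the strict sub-stochastic inequality $\mathcal C(|z|)\mathbf 1(x) = \sum_n |z|^n\mathbb P_x[C_1=n] < 1$ (valid for every $x\neq 0$ in the irreducible class $\mathcal I_{\bf C}(X_0)$) forces the spectral radius of the positive operator $\mathcal C(|z|)$ to be $<1$, and the domination $|\mathcal C(z)\psi|\leq \mathcal C(|z|)|\psi|$ transmits this bound to $\mathcal C(z)$. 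For $|z|=1$, $z\neq 1$, any modulus-one eigenvalue $\lambda$ would force, as in the peripheral argument of Proposition \ref{first}(ii), $|\psi|$ to be constant on $\mathcal I_{\bf C}(X_0)$; writing $\psi = e^{i\phi}$ on this class, the identity $\sum_{n,y} z^n\mathcal C_n(x,y)e^{i(\phi(y)-\phi(x))}=\lambda$ with $|\lambda|=1$ forces by convexity all the phases $z^n e^{i(\phi(y)-\phi(x))}$ to coincide; hence $z^{n_1-n_2}=1$ whenever $\mathcal C_{n_1}(x,y)$ and $\mathcal C_{n_2}(x,y)$ are both positive, and strong aperiodicity H3 (which implies $\mathcal C_n(x,y)>0$ for all sufficiently large $n$) forces $z=1$.

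For R4, fix $\varphi\in\mathcal B_\delta$ with $|\varphi|_{\mathcal B_\delta}\leq 1$. For $x\leq -1$, estimate (\ref{eventS}) gives
\begin{align*}
|\mathcal C_n\varphi(x)| &\leq \sum_{y\geq 0}|\varphi(y)|\,\mathbb P[\tau^S(x)=n,\, x+S_n=y]\\
&\preceq \frac{1+|x|}{n^{3/2}} \sum_{y\geq 0}(1+y^{1+\delta})\sum_{z>y} z\,\mu(z),
\end{align*}
and swapping the order of summation bounds the remaining double sum by $\mathbb E[(\xi^+)^{3+\delta}]<+\infty$ under H4. The case $x\geq 1$ is symmetric, using $\mathbb E[(\xi'^-)^{3+\delta}]<+\infty$, and $x=0$ is trivial since $\mathcal C_n(0,\cdot)=0$ for $n\geq 2$. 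Dividing by $1+|x|^{1+\delta}$ and using $(1+|x|)/(1+|x|^{1+\delta})=O(1)$ since $\delta>0$ gives $\Vert\mathcal C_n\Vert_{\mathcal B_\delta}=O(n^{-3/2})$, i.e.\ R4 with a constant $\ell$. For R5, Lemma \ref{equal}(a) gives the pointwise asymptotic $\sqrt n\,\mathbb P_x[C_1>n]\to 2c\,\check h_a(x)$ for $x\leq -1$ and $\to 2c'h'_d(x)$ for $x\geq 1$, and $=0$ at $x=0$; Lemma \ref{upperbound}(a) supplies the dominating bound $C(1+|x|)$ which is $\nu$-integrable since $\nu(|\cdot|^{1+\delta})<+\infty$. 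Dominated convergence then yields
\[
\sqrt n\sum_{j>n}r_j \ =\ \sqrt n\int\mathbb P_x[C_1>n]\,d\nu(x) \ \longrightarrow\ 2c\,\nu(\check h_a)+2c'\,\nu(h'_d),
\]
so R5 holds with $\ell\equiv 2(c\nu(\check h_a)+c'\nu(h'_d))$, matching the constant from R4.

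The main technical obstacle is R2: the $|z|<1$ case dissolves neatly into a sub-stochasticity argument, but the modulus-one case genuinely requires combining the convexity argument of Proposition \ref{first}(ii) with the full strength of strong aperiodicity H3 to promote the pointwise positivity $\mathcal C_n(x,y)>0$ for large $n$ into a statement about peripheral spectra on $\mathcal B_\delta$.
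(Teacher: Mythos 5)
Your argument follows the paper's own route exactly: apply Gou\"ezel's renewal theorem on $\mathcal B_\delta$ with $\beta=1/2$ and constant $\ell$, verify R1 via Proposition~\ref{first}, R3, R4 and R5 exactly as in Lemma~\ref{oper_bound} and Proposition~\ref{R2345} (same use of estimate (\ref{eventS}) and dominated convergence via Lemmas~\ref{equal} and~\ref{upperbound}), and then read off the pointwise limit by applying the operator convergence to $\mathbb 1_{\{y\}}\in\mathcal B_\delta$. Your R2 is a mild repackaging of Proposition~\ref{R2345}(i): you separate $|z|<1$ (sub-stochasticity/domination) from $|z|=1$, and in the latter you replace the paper's ``take $x=y$'' step --- which is awkward since $\mathcal C_n(x,x)=0$ for $x\neq 0$ --- by the cleaner observation that $z^{n_1-n_2}=1$ whenever $\mathcal C_{n_1}(x,y)$ and $\mathcal C_{n_2}(x,y)$ are both positive, together with the eventual positivity $\mathcal C_n(x,y)>0$ for large $n$ (which indeed follows from Lemma~\ref{equal}(b) applied to a fixed $y'\geq 1$ with $\mu(y+y')>0$); this is the same convexity mechanism as the paper, just stated more carefully.
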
 
This is a consequence of the fact that  $(\mathcal C_n)_{n \geq 1} $ is an aperiodic renewal sequence of operators on ${\mathcal B_\delta}$ satisfying  R4  and  R5  ( with $\beta = 1/2$ and $\ell$ constant). 
 
The fact that all the $\mathcal C_n, n \geq 1$, act on ${\mathcal B_\delta}$  and  $\sum_{n \geq 1} \Vert \mathcal C_n\Vert_{\mathcal B_\delta} <+\infty$ is a consequence of the following lemma. 
 \begin{lem} \label{oper_bound}
 Under hypotheses {\bf H1}-- {\bf H4}, for any $n\geq 1,$ the operator $\mathcal C_n$ acts on ${\mathcal B_\delta}$ and 
 $$\Vert \mathcal C_n \Vert_{\mathcal B_\delta} = O\left(\dfrac{1}{n^{3/2}}\right).$$
 \end{lem}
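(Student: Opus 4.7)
The plan is to establish the bound on each ``row'' $x$ separately, dividing into the three cases governing the transition kernel $\mathcal{C}$: $x\leq -1$, $x=0$, and $x\geq 1$. The key quantitative inputs are the pointwise upper bound~\eqref{eventS} (for the case $x\leq -1$) and its mirror analogue for the walk $S'$ (for $x\geq 1$), which already exhibit the desired $n^{-3/2}$ decay and a sublinear dependence on $|x|$.

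First I would treat $x\leq -1$. For any $y\geq 0$ (the only values of $y$ for which $\mathcal{C}_n(x,y)$ can be nonzero when $x\leq -1$) and any $\varphi\in{\mathcal B_\delta}$, one has by~\eqref{eventS}
\[
|\mathcal C_n\varphi(x)|
\leq \sum_{y\geq 0}\mathbb{P}_x[C_1=n,X_n=y]\,|\varphi(y)|
\preceq \frac{1+|x|}{n^{3/2}}\,|\varphi|_{\mathcal B_\delta}\sum_{y\geq 0}(1+y^{1+\delta})\sum_{z\geq y+1}z\mu(z).
\]
Swapping the order of summation and using $\sum_{y=0}^{z-1}(1+y^{1+\delta})\preceq z^{2+\delta}$, the double sum is controlled by $\sum_{z\geq 1}z^{3+\delta}\mu(z)=\mathbb E[(\xi_n^+)^{3+\delta}]$, which is finite by hypothesis~\textbf{H4}. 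Since $\delta>0$, the ratio $(1+|x|)/(1+|x|^{1+\delta})$ is bounded uniformly in $x\in\mathbb Z$, which gives $|\mathcal C_n\varphi(x)|/(1+|x|^{1+\delta})\preceq n^{-3/2}|\varphi|_{\mathcal B_\delta}$. The case $x\geq 1$ is symmetric: the analogous inequality for $S'$ yields a factor $\sum_{z\geq 1}z^{3+\delta}\mu'(-z)=\mathbb E[(\xi'^-_n)^{3+\delta}]$, again finite by~\textbf{H4}.

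For the remaining case $x=0$ only $n=1$ contributes, and then
\[
|\mathcal C_1\varphi(0)|\leq |\varphi|_{\mathcal B_\delta}\bigl(\alpha\,\mathbb E[1+|\xi_1|^{1+\delta}]+(1-\alpha)\,\mathbb E[1+|\xi'_1|^{1+\delta}]\bigr),
\]
which is finite since the second moments of $\xi_1$ and $\xi'_1$ exist by~\textbf{H1} (so that $|\xi_1|^{1+\delta}\leq 1+\xi_1^2$ provided we take $\delta\in(0,1]$, as we may). This single term is trivially $O(1)=O(1^{-3/2})$, and $\mathcal C_n(0,\cdot)\equiv 0$ for $n\geq 2$. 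Combining the three cases yields the stated uniform bound $\|\mathcal C_n\|_{\mathcal B_\delta}=O(n^{-3/2})$.

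I do not expect any genuine obstacle here: the bound~\eqref{eventS} has already been established, and hypothesis~\textbf{H4} was precisely tailored to make the $\sum z^{3+\delta}\mu(z)$ moment finite after swapping the summations. The only subtlety is purely bookkeeping—checking that in each case the ``forbidden'' sign of $y$ makes no contribution, and that the polynomial factor $1+|x|$ from the fluctuation estimate is indeed absorbed by $1+|x|^{1+\delta}$ thanks to $\delta>0$—which is exactly why the space ${\mathcal B_\delta}$ was chosen with a strictly super-linear weight.
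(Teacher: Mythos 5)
Your proof is correct and follows essentially the same route as the paper's: case split on the sign of $x$, invoke the bound~\eqref{eventS} (and its mirror image for $S'$) to get the $n^{-3/2}$ decay times a sublinear factor $1+|x|$, swap the summations over $w$ and $z$, and absorb everything using the moment condition in~\textbf{H4}. Your explicit remark that one should take $\delta\in(0,1]$ to make $\mathbb E[|\xi_1|^{1+\delta}]+\mathbb E[|\xi'_1|^{1+\delta}]<\infty$ in the $x=0$ case (since~\textbf{H1} only guarantees second moments and~\textbf{H4} only bounds one tail of each increment) is a slight sharpening of the paper, which simply asserts $|\mathcal C_1\phi(0)|\preceq|\phi|_{\mathcal B_\delta}$ without comment; since~\textbf{H4} is monotone in $\delta$, there is no loss of generality in imposing $\delta\le 1$, and it is good that you made this explicit.
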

 \begin{proof}
By \eqref{eventS}, for any $x \geq 1$ and $\phi \in {{\mathcal B_\delta}}$, 
\begin{align*}
 \vert \mathcal C_n\phi(-x) \vert  &\leq  \sum_{w \geq 0} \mathbb{P}_{-x}[ C_1=n; X_n=w]\, |\phi(w)|\\ 
      &= \displaystyle \sum_{w \geq 0}\mathbb{P}[\tau^S(-x)=n, -x+S_n=w]\, |\phi(w)|\\
      &\preceq \frac{1+x}{n^{3/2}}  \sum_{w \geq 0} \left(\sum_{z \geq w+1} z \mu(z)\right)  |\phi(w)|\\
      &\leq \frac{1+x}{n^{3/2}}\vert \phi\vert_{\mathcal B_\delta} \underbrace{\sum_{w \geq 0} (1+w^{1+\delta})\left(\sum_{z \geq w+1} z \mu(z)\right)}_{\preceq  \displaystyle \sum_{z \geq 1}z^{3+\delta}\mu(z)= \mathbb E[(\xi_1^+)^{3+\delta}]}.
  \end{align*}
Similarly, 
$$\displaystyle \vert \mathcal C_n\phi(x) \vert  \preceq  \frac{1+x}{n^{3/2}}\vert \phi\vert_{\mathcal B_\delta}\mathbb{E}[(\xi'^-_1)^{3+\delta}].$$  
Moreover, $|\mathcal C_1 \phi(0)| \preceq  \vert \phi \vert_{\mathcal B_\delta}$  and $\mathcal C_n\phi(0)=0$ for all $n\geq 2$. This completes the proof.
 \end{proof}
  Condition R1   coincides with the statement of Proposition \ref{first}. Similarly,  R2 and R3 correspond to assertions $i)$ and $ii)$ of the next proposition. Consequently, $(\mathcal C_n)_{n \geq 1}$ is an aperiodic renewal sequence of operators.
\begin{prop} \label{R2345}Suppose that {\bf H1}-- {\bf H4} are satisfied. Then the sequence $(\mathcal C_n)_{n\geq 1}$ holds the following properties
\begin{enumerate}[i)]
      \item The spectral radius $\rho_{\mathcal B_\delta}(z)$ of $\mathcal{C}(z)$ is strictly less than $1$ for $z \in \overline{\mathbb{D}}\setminus\{1\}$. \\
  \item For any $n\geq 1$, it holds $\Pi \mathcal C_n \Pi = r_n \Pi $ with  
    \[
    r_n:=\nu(\mathcal C_n 1)=\displaystyle \sum_{x\in \mathbb{Z}}\nu(x)\mathbb{P}_x[\mathcal \mathcal  C_1=n]\geq 0.
    \]
    \item $\displaystyle \sum_{j>n}r_j \thicksim\frac{2\bigl(c\nu( \check{h}_a)+c'\nu(h'_d)\bigr)}{\sqrt{n}}$ as $ n \to +\infty$. 
\end{enumerate}
\end{prop}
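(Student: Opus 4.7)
To establish (i), I would adapt the Perron--Frobenius argument already used in the proof of Proposition~\ref{first}(ii). Suppose $\mathcal C(z)\psi=\lambda\psi$ with $\psi\in\mathcal B_\delta\setminus\{0\}$, $z\in\overline{\mathbb D}$ and $|\lambda|=1$. The chain of inequalities $|\psi(x)|=|\lambda\psi(x)|\leq\sum_{n\ge 1}|z|^n\mathcal C_n|\psi|(x)\leq\mathcal C|\psi|(x)$ shows that $|\psi|$ is $\mathcal C$-subharmonic; hence, by the argument of Proposition~\ref{first}(ii), $|\psi|$ is constant on the essential class $\mathcal I_{\bf C}$, and normalising it to $1$ gives $\psi=e^{i\phi}$ on $\mathcal I_{\bf C}$. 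The equality case of the convexity inequality then forces $z^n e^{i(\phi(y)-\phi(x))}=\lambda$ for every $x,y\in\mathcal I_{\bf C}$ and every $n\ge 1$ with $\mathcal C_n(x,y)>0$. Composing two chains of such transitions between the same endpoints with total durations $N_1,N_2$ yields $z^{N_1-N_2}=1$. Under hypothesis \textbf{H3}, the walks $S$ and $S'$ are aperiodic; combined with the ``free'' one-step transition at $0$ (where $\mathcal C_1(0,y)=\alpha\mu(y)+(1-\alpha)\mu'(y)$), this should produce pairs $(N_1,N_2)$ with $\gcd(N_1-N_2)=1$, forcing $z=1$ and then $\lambda=1$.

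Statement (ii) is a one-line computation: since $\Pi\varphi=\nu(\varphi)\mathbf 1$,
\[
\Pi\mathcal C_n\Pi\varphi=\Pi\bigl(\nu(\varphi)\mathcal C_n\mathbf 1\bigr)=\nu(\varphi)\,\nu(\mathcal C_n\mathbf 1)\mathbf 1=r_n\Pi\varphi,
\]
with $r_n=\nu(\mathcal C_n\mathbf 1)=\sum_x\nu(x)\mathbb P_x[C_1=n]\ge 0$. For (iii) I would then sum in $j$ and split according to the sign of $x$:
\[
\sum_{j>n}r_j=\sum_x\nu(x)\mathbb P_x[C_1>n]=\sum_{x\leq -1}\nu(x)\mathbb P[\tau^S(x)>n]+\sum_{x\geq 1}\nu(x)\mathbb P[\tau^{S'}(x)>n],
\]
the $x=0$ contribution vanishing because $\mathbb P_0[C_1=1]=1$. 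Lemma~\ref{equal}(a) supplies the pointwise limits $\sqrt n\,\mathbb P[\tau^S(x)>n]\to 2c\,\check h_a(x)$ and $\sqrt n\,\mathbb P[\tau^{S'}(x)>n]\to 2c'\,h'_d(x)$, while Lemma~\ref{upperbound}(a) gives the uniform bound $\sqrt n\,\mathbb P_x[C_1>n]\preceq 1+|x|$. To apply dominated convergence it suffices to check that $\nu$ has a finite moment of order $1+\delta$, which follows from the continuity of $\Pi:\mathcal B_\delta\to\mathbb C\mathbf 1$: applying the bounded functional $\nu$ to the test function $x\mapsto 1+|x|^{1+\delta}\in\mathcal B_\delta$ gives $\nu(1+|x|^{1+\delta})<+\infty$. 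Passing to the limit then yields the announced equivalent $\sum_{j>n}r_j\sim 2(c\nu(\check h_a)+c'\nu(h'_d))/\sqrt n$.

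The most delicate step is the aperiodicity argument in (i): translating the strong aperiodicity of $\mu,\mu'$ (a spatial statement about supports) into the required \emph{temporal} statement, namely that the gcd of the lengths of cycles of $\mathcal X_{\bf C}$ returning to a common state is $1$, requires exhibiting explicit paths of coprime durations. Presumably one can concatenate one excursion of $S$ with a single pass through $0$ (duration exactly $1$) against two such excursions to obtain cycles of lengths differing by $1$; this combinatorial verification is the trickiest point of the proposition, whereas (ii) is algebraic and (iii) reduces to dominated convergence.
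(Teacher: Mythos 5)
Your proposal follows essentially the same route as the paper in all three parts: equality in the convexity inequality together with an aperiodicity argument for (i), the same one-line algebraic identity $\Pi\mathcal C_n\Pi\varphi=\nu(\mathcal C_n\mathbf 1)\,\Pi\varphi$ for (ii), and the same splitting of $\sum_{j>n}r_j$ by the sign of $x$ combined with Lemma~\ref{equal}(a) for (iii). If anything you are somewhat more careful than the paper: the paper's phrase ``taking $x=y$, we obtain $z^n=e^{i\theta}$ for all $n\geq 1$'' glosses over precisely the temporal-aperiodicity issue you single out as delicate (indeed $\mathcal C_n(x,x)=0$ for every $x\neq 0$ and every $n$, so one must really compare two excursion lengths between a fixed pair $(x,y)$ and argue their differences have gcd $1$), and your explicit dominated-convergence step, with the domination $\sqrt n\,\mathbb P_x[C_1>n]\preceq 1+|x|$ and the integrability $\nu(1+|x|^{1+\delta})<\infty$ drawn from the boundedness of $\Pi$ on $\mathcal B_\delta$, makes rigorous an interchange of limit and sum that the paper leaves implicit after merely noting $\nu(\check h_a),\nu(h'_d)<\infty$.
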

\begin{proof}
 \begin{enumerate}[$i)$]
     \item The argument  is close to the one used to prove  Proposition \ref{first}. For any $z\in \overline{\mathbb{D}}\setminus\{1\}$, the operator $\mathcal{C}(z)$ is compact on ${{\mathcal B_\delta}}$ with spectral radius $ \rho_{\mathcal B_\delta}(z)\leq 1$. We now prove $ \rho_{\mathcal B_\delta}(z) \neq 1$ by contraposition.  Suppose $\rho_{{{\mathcal B_\delta}}}(z)=1$; in other words, there exist $\theta \in \mathbb{R}$ and $\varphi \in {{\mathcal B_\delta}}$ such that $\mathcal{C}(z)\varphi= e^{i\theta} \varphi$. Since $\mathcal{C}$ is bounded from ${{\mathcal B_\delta}}$ into $L^\infty(\mathbb{Z})$ and $0\leq \vert  \varphi| \leq \mathcal{C}\vert  \varphi|$, the function $\vert  \varphi|$ is $\mathcal C$- superharmonic, bounded and thus constant on its essential class $\mathcal{I}_{\bf C}(X_0)$. 
     
 Without loss of generality, we can suppose  that $\vert  \varphi(x)|=1$ for any  $x \in \mathcal{I}_{\bf C}(X_0)$; equivalently, $\varphi(x)=e^{i\phi(x)}$  for  some function  $\phi:\mathcal{I}_{\bf C}(X_0) \to \mathbb R$. For any $x \in \mathcal{I}_{\bf C}(X_0)$, we get
     \begin{align*}
         \mathcal{C}(z)\varphi(x)= e^{i\theta}\varphi(x) &\Longleftrightarrow \displaystyle \sum_{n\geq 1} \displaystyle \sum_{y \in \mathcal{I}_{\bf C}(X_0)} z^n e^{i(\phi(y)-\phi(x))} \mathbb{P}_x[C_1=n; X_n=y]=e^{i\theta},
     \end{align*}
with $\displaystyle \sum_{n\geq 1} \displaystyle \sum_{y \in \mathcal{I}_{\bf C}(X_0)} \mathbb{P}_x[C_1=n; X_n=y]=1$. By convexity, it readily holds   $z^n e^{i(\phi(y)-\phi(x))}=e^{i\theta}$ for all $x, y \in \mathcal{I}_{\bf C}(X_0)$ and $n \geq 1$.
  By taking $x=y$, we obtain $z^n=e^{i\theta}$ for all $n\geq 1$; consequently  $z=1$,  contradiction.
\item For any $\phi \in {{\mathcal B_\delta}}$ and $n\geq 1$,
 $$
         \Pi \mathcal C_n \Pi \phi= \nu(\phi)\Pi(\mathcal C_n \mathbf{1})  = \nu(\phi) \mathbf{1} \displaystyle \sum_{z\in \mathbb{Z}} \nu(z) \mathbb{P}_z[C_1=n] = r_n\Pi(\phi) 
 $$
     with $r_n=  \displaystyle \sum_{z\in \mathbb{Z}} \nu(z) \mathbb{P}_z[C_1=n]\geq 0$.
    \item On the one hand, by Proposition \ref{first}, the eigenprojector $\Pi$ acts on ${\mathcal B_\delta}$; thus, since $\check{h} _a \in {\mathcal B_\delta}$, it holds     $\nu(\check{h}_a) <+\infty$. On the other hand, the support $\mathcal{I}_{\bf C}(X_0)$ of $\nu$ intersects $\mathbb Z^-$ and the support of $\check{h}_a$ equals $\mathbb Z^-$;  hence $\nu(\check{h}_a)>0$.   Similarly $0<\nu(h_d')<+\infty$.  

     Now, let us write
     \begin{align*}
        \sum_{j>n}r_j &= \displaystyle \sum_{j>n}  \displaystyle \sum_{x\in \mathbb{Z}} \nu(x) \mathbb{P}_x[\mathcal \mathcal C_1=j]\\
         &= \displaystyle \sum_{x\in \mathbb{Z}} \nu(x) \mathbb{P}_x[\mathcal \mathcal C_1>n]\\
         &=\displaystyle \sum_{x \leq -1} \nu(x) \mathbb{P}[\tau^S(x)>n]+\displaystyle \sum_{x \geq 1} \nu(x) \mathbb{P}[\tau^{S'}(x)>n] \quad (\text{since } \mathbb{P}_0[\mathcal \mathcal C_1=1]=1)\\
         &\thicksim\frac{2c}{\sqrt{n}} \displaystyle \sum_{x \leq -1}\nu(x) h_a(-x) +\frac{2c'}{\sqrt{n}} \displaystyle \sum_{x \geq 1}\nu(x) h'_d(x)=2\ \frac{c\  \nu( \check{h}_a) + c' \ \nu(h'_d)}{\sqrt{n}}.
     \end{align*} 
 \end{enumerate} 
\end{proof}
Finally, combining Lemma  \ref{oper_bound} and Proposition  \ref{R2345} $iii)$, we see that conditions R4  and  R5 are satisfied with $\ell=  {\rm const}=  2 (c\  \nu( \check{h}_a) + c' \ \nu(h'_d))$  and $\beta = 1/2$.

Consequently, by \cite{Gouezel}, 
the sequence  $(\sqrt{n}\mathcal H_n)_{n\geq 1}$ converges in $(\mathcal L({{\mathcal B_\delta}}), \Vert \cdot \Vert_{\mathcal B_\delta})$ to the operator ${\bf c}^{-1}\Pi$ with 
 $\displaystyle  {\bf c}={2\pi\bigl(c\nu( \check{h}_a)+c'\nu(h'_d)\bigr)}$. Formally, one may write
\begin{align} \label{consequence}
\bigg\Vert \sqrt{[ns]}{\mathcal H}_{[ns]}- {\bf c}^{-1}\Pi \bigg\Vert_{\mathcal B_\delta} \longrightarrow 0, \quad \text{as} \quad n \to +\infty.
\end{align}
\section{Proof of  Theorem \ref{main_paper02}} 

For $m \geq 1$, let $\{\varphi_i: \mathbb{R} \to \mathbb{R} \mid i= 1,\ldots, m\}$ be a sequence of bounded and Lipschitz continuous functions with corresponding Lipschitz coefficients $Lip(\varphi_i)$. Assume that the time sequence  $\{t_i\}_{1\leq i \leq m}$ is strictly increasing with values in $(0,1]$ and $t_0=0$. In this part, we prove that
$$\displaystyle \lim_{n\to +\infty} \mathbb{E}_x \left[\prod_{i=1}^m \varphi_i \bigg(X^{(n)}(t_i)\bigg)\right]= \displaystyle \int_{\mathbb{R}^m} \prod_{i=1}^m \varphi_i(u_i) p^{\gamma}_{t_i-t_{i-1}}(u_{i-1}, u_i) \, du_1 \ldots du_m $$
with $u_0=0$.
 
 Without loss of generality, we assume $\sigma=\sigma'$ and $x \geq 1$ to reduce unnecessary complexity associated with subcases.  
 
\subsection{Convergence of the one dimensional distributions \texorpdfstring{$m=1$}{m}}

 We first notice that $\mathbb{E}_x[\varphi_1(X^{(n)}(t_1))] \approx \mathbb{E}_x\left[\varphi_1\left(\dfrac{X_{[nt_1]}}{\sigma\sqrt{n}}\right)\right]$ since
\begin{align*}
\bigg\vert \mathbb{E}_x[\varphi_1(X^{(n)}(t_1))]-\mathbb{E}_x\left[\varphi_1\left(\dfrac{X_{[nt_1]}}{\sigma\sqrt{n}}\right)\right] \bigg\vert &\leq Lip(\varphi_1)\, \mathbb{E}_x\left[\bigg\vert X^{(n)}(t_1)-\dfrac{X_{[nt_1]}}{\sigma\sqrt{n}}\bigg\vert\right]\\
&\leq Lip(\varphi_1)\,\dfrac{\mathbb{E}[|\xi_{[nt_1]+1}|]+\mathbb{E}[|\eta_{[nt_1]+1}|]+\mathbb{E}[|\xi'_{[nt_1]+1}|]}{\sigma \sqrt{n}},  
\end{align*}
which tends to $0$ as $n\to +\infty$.
 Now, we can decompose $\mathbb{E}_x\left[ \varphi_1\left(\dfrac{X_{[nt_1]}}{\sigma \sqrt{n}}\right)\right]$ as
$$\underbrace{\mathbb{E}_x\left[\varphi_1\left(\dfrac{X_{[nt_1]}}{\sigma \sqrt{n}}\right), X_{[nt_1]}=0\right]}_{A_0(n)}
+\underbrace{\mathbb{E}_x\left[\varphi_1\left(\dfrac{X_{[nt_1]}}{\sigma \sqrt{n}}\right), X_{[nt_1]}>0\right]}_{A^+(n)}
+ \underbrace{\mathbb{E}_x\left[ \varphi_1\left(\dfrac{X_{[nt_1]}}{\sigma \sqrt{n}}\right), X_{[nt_1]}<0\right]}_{A^-(n)}.$$ 
 The   term $A_0(n)$ tends to $0$ as $n \to +\infty$ since $(X_n)_{n \geq 0}$ is null recurrent. It remains to control the two other terms.
 
\noindent$\bullet$ Estimate of $A^+(n)$
\begin{align*} 
A^+(n)\approx &\displaystyle {\sum_ { k_1=1}^{ [nt_1]-1} \displaystyle \sum_{\ell \geq 1} }\displaystyle \sum_{y\geq 1}\mathbb{E}_x\bigg[ \varphi_1\left(\dfrac{X_{[nt_1]}}{\sigma \sqrt{n}}\right), C_\ell= k_1, X_{ k_1}=y, y +\xi'_{ k_1+1}>0, \\
& \hspace{5cm} \ldots, y+\xi'_{ k_1+1}+\ldots+\xi'_{[nt_1]}>0\bigg]\\
 =\displaystyle &{\sum_ { k_1=1}^{ [nt_1]-1} }\displaystyle \sum_{y \geq 1} \mathbb{E}\left[ \varphi_1\left(\dfrac{y+\xi'_{ k_1+1}+\ldots+\xi'_{[nt_1]}}{\sigma \sqrt{n}}\right), \tau^{S'}(y)>[nt_1]- k_1\right]\\
 & \hspace{6cm} \left(\displaystyle {  \sum_{\ell\geq 1}}\mathbb{P}_x[C_\ell= k_1; X_{ k_1}=y]\right) \\ 
 =\displaystyle &{\sum_{ k_1=1}^{[nt_1]-1}}\displaystyle \sum_{y \geq 1} H_{ k_1}(x,y) \mathbb{E}\left[ \varphi_1\left(\dfrac{y+S'_{[nt_1]- k_1}}{\sigma \sqrt{n}}\right),  \tau^{S'}(y)>[nt_1]- k_1\right].
\end{align*}
    For any $0 \leq s_1 \leq   t_1$ and $n \geq 1$, let $f_n$ be the function defined by
$$f_n(s_1):=n\displaystyle \sum_{y \geq 1} H_{[ns_1]}(x,y) \mathbb{E}\left[ \varphi_1\left(\dfrac{y+S'_{[nt_1]-[ns_1]}}{\sigma \sqrt{n}}\right),  \tau^{S'}(y)>[nt_1]-[ns_1]\right]$$
if $0\leq s_1 < \frac{[nt_1]}{n}$  and $f_n(s_1)=0$ if $\frac{[nt_1]}{n}\leq s_1\leq t_1 $. Hence,  
\begin{align*}
    A^+(n)= \displaystyle \int_{0}^{t_1}f_n(s_1)\,ds_1 + O\left(\dfrac{1}{\sqrt{n}}\right).
\end{align*}
%
%
The convergence of the term $A^+(n)$  as $n\to +\infty$ is a consequence of the two following properties:

\noindent 
$\bullet$ for any $n \geq 1$,
\begin{equation} \label{iezhrkv}
\vert f_n(s_1) \vert \preceq\frac{1+|x|}{\sqrt{s_1(t_1-s_1)}}  \in L^1([0,t_1]).
\end{equation}
$\bullet$ 
for any $s_1 \in [0, t_1]$,   
\begin{align} \label{ekhrf}
\lim_{n\to +\infty} f_n(s_1)&= 
\frac{\gamma}{\pi\sqrt{s_1(t_1-s_1)}}  \int_{0}^{+\infty}\varphi_1(z\sqrt{t_1-s_1})z \exp\left(\frac{-z^2}{2}\right)dz
\notag
\\
&=\dfrac{\gamma}{\pi}  \int_0^{+\infty}\varphi_1(u) u  \ \frac{\exp\left(\dfrac{-u^2}{2(t_1-s_1)}\right)}{\sqrt{s_1(t_1-s_1)^3}}du \quad \quad (\text{set }u=z\sqrt{t_1-s_1}).
\end{align} 
Indeed, applying the Lebesgue dominated convergence theorem, we obtain
\begin{align}\label{positive}
   \lim_{n\to +\infty} A^+(n) 
      &=
   \dfrac{\gamma}{\pi}   \int_0^{+\infty}\varphi_1(u) u\left(  \int_0^{t_1}\frac{1}{\sqrt{s_1(t_1-s_1)^3}}\exp\left(\dfrac{-u^2}{2(t_1-s_1)}\right)ds_1 \right)du \nonumber \\ 
    &=\dfrac{\gamma}{\pi}   \int_0^{+\infty}\varphi_1(u)u \bigg[\frac{1}{t_1} \exp\left(\dfrac{-u^2}{2t_1}\right) \underbrace{  \int_0^{+\infty}\dfrac{1}{\sqrt{s}}\exp\bigg(\dfrac{-u^2}{2t_1}s\bigg)ds}_{=\dfrac{\sqrt{2\pi t_1}}{u}}\bigg]du \nonumber \quad (\text{set } s:=\frac{s_1}{t_1-s_1})\\ 
    &=\gamma   \int_0^{+\infty} \varphi_1(u)\frac{2\exp\left(-u^2/{2t_1}\right)}{\sqrt{2\pi t_1}}du. 
\end{align}
Similarly,
 \begin{align}\label{negative}
   \lim_{n\to +\infty}A^-(n)=(1-\gamma)   \int_{-\infty}^0 \varphi_1(u)\frac{2\exp\left(-u^2/{2t_1}\right)}{\sqrt{2\pi t_1}}du.  
 \end{align} 
Combining  \eqref{positive} and \eqref{negative}, we thus obtain  
   $$  \lim_{n\to +\infty} \mathbb{E}_x[\varphi_1(X^{(n)}_{t_1})]=   \int_{\mathbb{R}}\varphi_1(u) p_{t_1}^{\gamma}(0, u)du=   \int_{\mathbb{R}}\Tilde{\varphi_1}(u)\frac{2\exp(-u^2/2t_1)}{\sqrt{2\pi t_1}}du,$$
 where $\Tilde{\varphi}_1(u)= \gamma \varphi_1(u)\mathbb{1}_{(0,+\infty)}(u)+(1-\gamma) \varphi_1(u)\mathbb{1}_{(-\infty,0)}(u).$
 
It thus remains to establish (\ref{iezhrkv}) and (\ref{ekhrf}).   The first natural idea is to set
$$
\psi_n(y):= \sqrt{n} \mathbb{E}\left[\varphi_1\left(\dfrac{y+S'_{[nt_1]-[ns_1]}}{\sigma\sqrt{n}}\right),  \tau^{S'}(y)>[nt_1]-[ns_1]\right]  
 $$
 and to remark that   $ f_n(s_1)=  \sqrt{n} \mathcal H_{[ns_1]}(\psi_{n})(x)$ with $\psi_n \in \mathcal B_\delta$ .
One can easily check that $(\psi_n)_{n\geq 0}$ converges pointwise to some function $\psi \in \mathcal B_\delta$ but it is much more complicated to prove that this convergence  holds in $\mathcal B_\delta$. This can be done  when $\delta \geq 1$ with a strong moment assumption (namely moments of order $\geq 4$ for $\mu'$) by using a recent result in \cite{LLP}; unfortunately, such a result does not exist  for the  Brownian meander, which is useful in the sequel for convergence of multidimensional  distributions. This forces us to propose another strategy that we now present.

 For this purpose, for any $n \geq 1$ and any fixed $0<s_1< t_1$, we decompose $f_n(s_1)$ as $f_n(s_1)=   \sum_{y \geq 1}a_n(y)b_n(y)$, where  
\begin{align*}
  a_n(y) &:= n \displaystyle H_{[ns_1]}(x, y)\,\mathbb{P}[\tau^{S'}(y)>[nt_1]-[ns_1]], \\
b_n(y) &:= \mathbb{E}\left[\varphi_1\left(\dfrac{y+S'_{[nt_1]-[ns_1]}}{\sigma\sqrt{n}}\right) \mid \tau^{S'}(y)>[nt_1]-[ns_1]\right]  
\end{align*}
and apply the following  classical lemma with $V= \mathbb Z^+$:
\begin{lem} \label{product} Let $V$ be   denumerable    and $(a_n(v))_{v\in V}, (b_n(v))_{v \in V  }$ be real sequences satisfying

 (i) $a_n(v) \geq 0$ for any $n \geq 1, v \in V$ and $ \displaystyle \lim_{n \to +\infty} \displaystyle \sum_{v\in V}a_n(v) = A$,
 
 (ii) for any $\epsilon >0$, there exists a finite set $V_\epsilon \subset V$ s.t. $\sup_{n \geq 1} \sum_{v \notin V_\epsilon} a_n(v) <\epsilon$.

(iii) $\displaystyle \lim_{n \to +\infty} b_n(v)= b$ for any $ v \in V$  and $\displaystyle \sup_{n \geq 1, v \in V} |b_n(v)| <+\infty$.

   \noindent  Then
    $\quad \displaystyle \lim_{n \to +\infty} \displaystyle \sum_{v \in V} a_n(v) b_n(v)= Ab.$
 \end{lem}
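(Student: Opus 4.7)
The plan is to use a truncation argument where condition (ii) plays the role of a uniform tightness hypothesis for the discrete measures $a_n$, and condition (iii) provides the ``pointwise convergence plus uniform bound'' needed to pass to the limit on finite sets. First I would fix $\varepsilon > 0$, choose the finite set $V_\varepsilon$ provided by (ii), set $M := \sup_{n, v} |b_n(v)| < +\infty$, and decompose
\[
\sum_{v \in V} a_n(v) b_n(v) - A b = \sum_{v \in V_\varepsilon} a_n(v) \bigl(b_n(v) - b\bigr) + b\Bigl(\sum_{v \in V_\varepsilon} a_n(v) - A\Bigr) + \sum_{v \notin V_\varepsilon} a_n(v) b_n(v).
\]

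Then I would estimate the three pieces in turn. The third piece is bounded by $M\varepsilon$ uniformly in $n$, thanks to (ii) combined with the uniform bound $|b_n| \leq M$. For the second piece, (i) and (ii) together give $\sum_{v \in V_\varepsilon} a_n(v) = \sum_V a_n(v) - \sum_{v \notin V_\varepsilon} a_n(v)$; the first summand tends to $A$ by (i) and the second is $<\varepsilon$ uniformly in $n$, so $\limsup_n |b\bigl(\sum_{V_\varepsilon} a_n(v) - A\bigr)| \leq |b|\varepsilon$. The first piece is a \emph{finite} sum over $V_\varepsilon$: for each fixed $v$ the sequence $a_n(v)$ is bounded (since $a_n(v) \leq \sum_{w \in V} a_n(w)$, which converges and is therefore bounded), while $b_n(v) - b \to 0$ by (iii); hence each summand tends to $0$ and so does the finite sum.

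Combining these three estimates gives $\limsup_n \bigl|\sum_V a_n(v) b_n(v) - Ab\bigr| \leq (M + |b|)\varepsilon$, and since $\varepsilon$ is arbitrary the lemma follows. I do not anticipate a genuine obstacle: condition (ii) was designed precisely to make the truncation tail negligible uniformly in $n$, so the argument is essentially a discrete/vector form of Pratt's lemma (or of Scheffé–type dominated convergence). The only point that deserves care is checking that one can indeed bound $a_n(v)$ for each fixed $v$, which follows immediately from (i) since $\sum_{w} a_n(w)$ converges.
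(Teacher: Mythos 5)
Your proof is correct. The paper itself gives no proof for this lemma, labelling it ``classical,'' and your truncation argument is exactly the standard one that the authors are implicitly invoking: decompose over the finite set $V_\varepsilon$ and its complement, use (ii) and the uniform bound on $b_n$ to kill the tail, use (i) minus (ii) to handle the ``$b$ times mass'' term, and use pointwise convergence of $b_n$ together with boundedness of $a_n(v)$ for the finite main term. Nothing to add.
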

 Let us check that these  conditions are satisfied by the families $(a_n(y))_{y \geq 1}, (b_n(y))_{y \geq 1  }$  defined above.

 \underline{ Condition $(i).$} The sum $\displaystyle \sum_{y\geq 1} a_n(y) $ may be written as
\begin{align}\label{sum a_n(y)}
\sum_{y\geq 1} a_n(y) = {1+o(n)\over \sqrt{s_1 (t_1- s_1)}} \ \sqrt{[ns_1]}\mathcal H_{[ns_1]}({\bf h}'_{[nt_1]-[ns_1]})(x).
\end{align}
On the one hand,  the sequence  $(\sqrt{[ns_1]}\mathcal H_{[ns_1]})_{n\geq 1}$ converges in $(\mathcal L({{\mathcal B_\delta}}), \Vert \cdot \Vert_{\mathcal B_\delta})$ to the operator ${1\over 2\pi\bigl(c\nu( \check{h}_a)+c'\nu(h'_d)\bigr)}\Pi$; on the other hand, the sequence $({\bf h}'_{[nt_1]-[ns_1]})_{n \geq 1}$ converges  in ${\mathcal B_\delta} $ to $2  c' h_d'$.
Hence condition $(i)$ holds  with $$A= \dfrac{1}{\sqrt{s_1(t_1-s_1)}} {c' \nu(h'_d)\over \pi\bigl(c\nu( \check{h}_a)+c'\nu(h'_d)\bigr)}=\dfrac{\gamma}{\pi \sqrt{s_1(t_1-s_1)}}.$$

 \underline{Condition $(ii)$.}  Fix $\epsilon >0$.  We want to find $y_\epsilon\geq 1$ s.t. $\displaystyle \sum_{y \geq y_\epsilon} a_n(y)\leq \epsilon$ for any $n \geq 1$. By Lemma \ref{upperbound}, there exists a constant $C_0>0$ s.t. $0\leq  {\bf h}'_k(y)\leq C_0(1+y)$ for any $y, k\geq 1$; hence,   for $y \geq y_\epsilon$, 
 $$0\leq  {\bf h}'_k(y)\leq C_0\left(1+{y^{1+\delta}\over y_\epsilon^\delta}\right)\leq 2C_0{  1+y^{1+\delta}\over y_\epsilon^\delta}.$$
 Consequently the function $ {\bf h}'_k {\bf \mathbb{1}}_{[y_\epsilon, +\infty[ }$ belongs to ${\mathcal B_\delta}$ and $  \vert  {\bf h}'_k {\bf \mathbb{1}}_{[y_\epsilon, +\infty[ }\vert _{\mathcal B_\delta} \leq {2C_0\over y_\epsilon^\delta}$ for any $k \geq 1$.
By \eqref{sum a_n(y)}, it follows
    \begin{align*}
 0\leq \sum_{y\geq y_\epsilon} a_n(y) &\preceq  
\underbrace{\sup_{n \geq 1} \sqrt{[ns_1]}  \Vert \mathcal H_{[ns_1]}\Vert_{\mathcal B_\delta}}_{<+\infty}\ 
\underbrace{ \sup_{n \geq 1} \vert {\bf h}'_{[nt_1]-[ns_1]} {\bf \mathbb{1}}_{[y_\epsilon, +\infty[ }\vert _{\mathcal B_\delta}}_{ \preceq {1\over y_\epsilon^\delta}}.
\end{align*}
We conclude choosing $y_\epsilon$ large enough.

 \underline{Condition $(iii)$.}  By (\ref{ekhrf}), it holds  with 
 $\displaystyle
 b 
 = \int_0^{+\infty}\varphi_1(u) u  \ \frac{\exp\left(\dfrac{-u^2}{2(t_1-s_1)}\right)}{t_1-s_1}du
 .$


\subsection{Convergence of the multidimensional distributions} 

We focus here  on the case $m=2$; the cases $m\geq 3$ is done by induction.
We fix $0 < t_1< t_2$ and, for $n\geq 1$ given,  let $\kappa=\kappa_{t_1}$ be the first crossing time after time $[nt_1]$ defined by
  $$\kappa:=\min\{k>[nt_1]: X_{[nt_1]} X_k\leq 0\}.$$
  As in the case $m=1$, it holds $$\mathbb{E}_x[\varphi_1(X^{(n)}(t_1))\varphi_2(X^{(n)}(t_2))] \approx \mathbb{E}_x\left[\varphi_1\left(\dfrac{X_{[nt_1]}}{\sigma\sqrt{n}}\right)\varphi_2\left(\dfrac{X_{[nt_2]}}{\sigma\sqrt{n}}\right)\right],$$ 
  and the right hand side term may be decomposed as $A_0(n)+A^{\pm}_1(n)+ A^{\pm}_2(n)$,
where
$$ A_0(n):= \mathbb{E}_x\left[\varphi_1\left(\dfrac{X_{[nt_1]}}{\sigma\sqrt{n}}\right)\varphi_2\left(\dfrac{X_{[nt_2]}}{\sigma\sqrt{n}}\right), X_{[nt_1]}=0\right],
$$
$$A^{\pm}_1(n):=  \sum_{k_2=[nt_1]+1}^{[nt_2]}\mathbb{E}_x\left[\varphi_1\left(\dfrac{X_{[nt_1]}}{\sigma\sqrt{n}}\right)\varphi_2\left(\dfrac{X_{[nt_2]}}{\sigma\sqrt{n}}\right) \mathbb{1}_{[\kappa=k_2]}\mathbb{1}_{[\pm X_{[n{t_1}]}>0]}\right],$$
and
$$A^{\pm}_2(n):=\mathbb{E}_x\left[\varphi_1\left(\dfrac{X_{[nt_1]}}{\sigma\sqrt{n}}\right)\varphi_2\left(\dfrac{X_{[nt_2]}}{\sigma\sqrt{n}}\right) \mathbb{1}_{[\kappa>[nt_2]]}\mathbb{1}_{[\pm X_{[n{t_1}]}>0]}\right].$$
As previously, the term $A_0(n)$ tends to $0$ since $(X_n)_{n \geq 0}$ is null recurrent.

\noindent $\bullet$ Estimate of $A^\pm_1(n)$
\begin{align*}
    A^+_1(n) \approx  \sum_{k_1=1}^{[nt_1]-1} &\displaystyle \sum_{{k_2=[nt_1]+1}}^{[nt_2]}  \sum_{\ell \geq 1}  \sum_{y \geq 1}  \sum_{z \geq 1} \sum_{w\leq 0} \mathbb{E}_x \bigg[\varphi_1\left(\dfrac{X_{[nt_1]}}{\sigma\sqrt{n}}\right)\varphi_2\left(\dfrac{X_{[nt_2]}}{\sigma\sqrt{n}}\right), C_\ell=k_1,\\
    & \hspace{1.5cm} X_{k_1}=y, y+\xi'_{k_1+1}>0, \ldots+, y+\xi'_{k_1+1}+\ldots+ \xi'_{k_2-2}>0,\\
    & \hspace{4.2cm} y+\xi'_{k_1+1}+\ldots + \xi'_{k_2-1}=z,  y+\xi'_{k_1+1}+\ldots + \xi'_{k_2}=w \bigg]\\
   =\displaystyle \sum_{k_1=1}^{[nt_1]-1} &\displaystyle \sum_{{k_2=[nt_1]+1}}^{[nt_2]}  \sum_{\ell \geq 1}  \sum_{y \geq 1}  \sum_{z\geq 1}  \sum_{w\leq 0} \mathbb{E}_x \bigg[\varphi_1\left(\dfrac{y+\xi'_{k_1+1}+\ldots+ \xi'_{[nt_1]}}{\sigma\sqrt{n}}\right)\varphi_2\left(\dfrac{X_{[nt_2]}}{\sigma\sqrt{n}}\right),\\
   & \hspace{8mm}C_\ell=k_1, X_{k_1}=y, y+\xi'_{k_1+1}>0, \ldots, y+\xi'_{k_1+1}+\ldots+ \xi'_{k_2-2}>0,\\ 
   & \hspace{5cm} y+\xi'_{k_1+1}+\ldots+ \xi'_{k_2-1}=z\bigg] \mathbb P[ \xi'_{k_2}=w-z]\\
     =\displaystyle \sum_{k_1=1}^{[nt_1]-1} &\displaystyle \sum_{y\geq 1} H_{k_1}(x,y)  \sum_{k_2=[nt_1]+1}^{[nt_2]}  \sum_{z\geq 1}
     \ \mathbb{E} \bigg[\varphi_1\left(\dfrac{y+S'_{[nt_1]-k_1}}{\sigma\sqrt{n}}\right) ,  \tau^{S'}(y)>k_2-k_1-1,\\
     &\hspace{4cm} y+S'_{k_2-k_1-1}=z\bigg]   \sum_{w\leq 0} \mathbb{E}_w\bigg[\varphi_2\left(\dfrac{X_{[nt_2]-k_2}}{\sigma \sqrt{n}}\right)\bigg] \mu'(w-z).
\end{align*}
For any $ (s_1, s_2) \in [0,  t_1] \times [t_1,  t_2]$ and $n \geq 1$, let $g_n$ be the function defined by
\begin{align*}
   g_n(s_1,s_2)&= n^2 \displaystyle \sum_{y\geq 1} H_{[ns_1]}(x,y)   \sum_{z\geq 1}\mathbb{E} \bigg[\varphi_1\left(\dfrac{y+S'_{[nt_1]-[ns_1]}}{\sigma\sqrt{n}}\right), \tau^{S'}(y)>[ns_2]-[ns_1]-1,    \\
     &\hspace{3.2cm} y+S'_{[ns_2]-[ns_1]-1}=z\bigg]   
     \sum_{w\leq 0} \mathbb{E}_w\bigg[\varphi_2\left(\dfrac{X_{[nt_2]-[ns_2]}}{\sigma \sqrt{n}}\right)\bigg] 
     \mu'(w-z) 
\end{align*}
if $0\leq s_1 < \frac{[nt_1]}{n}$  and  ${{[nt_1]+1}\over n}\leq s_2\leq {[nt_2]\over n}$, and $0$ otherwise.
 Hence, 
 $$A^+_1(n)=  \int_{0}^{t_1} \int_{t_1}^{t_2} g_n(s_1, s_2) ds_1 ds_2 + O\left(\dfrac{1}{\sqrt{n}}\right).$$
 
  {The convergence of the term $A_1^+(n)$  as $n\to +\infty$ is a consequence of the two following properties whose proofs are postponed at the end of the present subsection:

\noindent 
$\bullet$ for any $n \geq 1$,
\begin{equation} \label{iehutvngjzo}
  |g_n(s_1, s_2)| \preceq\frac{1+|x|}{\sqrt{s_1(s_2-s_1)^3}} \in L^1([0, t_1] \times [t_1, t_2]);
\end{equation}
$\bullet$ 
for any $(s_1, s_2) \in [0, t_1]\times [t_1, t_2]$,   
\begin{equation} \label{vahbj}
 \lim_{n \to +\infty}g_n(s_1, s_2) =\frac{\gamma}{\pi^2}\displaystyle \int_{0}^{+\infty}  \int_{-\infty}^{+\infty}\varphi_1(u)\Tilde{\varphi_2}(v) u^2  \frac{e^{\frac{-v^2}{2(t_2-s_2)}} e^{\frac{-u^2}{2\frac{(t_1-s_1)(s_2-t_1)}{s_2-s_1}}}}{\sqrt{s_1 (t_1-s_1)^3(s_2-t_1)^3 (t_2-s_2)}}du dv.\end{equation} 
Indeed, applying the Lebesgue dominated convergence theorem, we obtain 
\begin{align*}
   &\lim_{n\to +\infty}A^+_1(n)\\
  &=\dfrac{\gamma}{\pi^2}\displaystyle \int_{0}^{+\infty}  \int_{-\infty}^{+\infty}\varphi_1(u)\Tilde{\varphi_2}(v)\bigg(\displaystyle \int_0^{t_1} \int_{t_1}^{t_2}\frac{e^{\frac{-v^2}{2(t_2-s_2)}} u^2 \exp\bigg(\frac{-u^2}{2\frac{(t_1-s_1)(s_2-t_1)}{s_2-s_1}}\bigg)}{\sqrt{s_1 (t_1-s_1)^3(s_2-t_1)^3 (t_2-s_2)}}ds_1 ds_2\bigg)du dv\\
  &=\dfrac{\gamma}{\pi^2}\frac{\sqrt{2\pi t_1}}{t_1}\displaystyle \int_{0}^{+\infty}  \int_{-\infty}^{+\infty}\varphi_1(u)\Tilde{\varphi_2}(v)|u| \bigg(\displaystyle \int_{t_1}^{t_2}\frac{e^{\frac{-u^2 s_2}{2t_1(s_2-t_1)}}e^{\frac{-v^2}{2(t_2-s_2)}}}{\sqrt{(t_2-s_2)(s_2-t_1)^3}}ds_2 \bigg)du dv\\
  &=\dfrac{2\gamma}{\pi\sqrt{t_1(t_2-t_1)}}\displaystyle \int_{0}^{+\infty}  \int_{-\infty}^{+\infty}\varphi_1(u)\Tilde{\varphi_2}(v) e^{-\frac{u^2t_2+v^2t_1+2|uv|t_1}{t_1(t_2-t_1)}}du dv
\end{align*}
which  can be rewritten as
\begin{align} \label{zkehvrnpejv}
   \lim_{n\to +\infty}A^+_1(n)&=\dfrac{2\gamma^2}{\pi\sqrt{t_1(t_2-t_1)}}\displaystyle \int_{0}^{+\infty}  \int_0^{+\infty}\varphi_1(u)\varphi_2(v)e^{-\frac{u^2}{2t_1}} e^{-\frac{(u+v)^2}{2(t_2-t_1)}}du dv \notag\\
  &+\dfrac{2\gamma(1-\gamma)}{\pi\sqrt{t_1(t_2-t_1)}}\displaystyle \int_{0}^{+\infty}  \int_{-\infty}^0\varphi_1(u) \varphi_2(v) e^{-\frac{u^2}{2t_1}} e^{-\frac{(u-v)^2}{2(t_2-t_1)}}du dv, 
\end{align} 
by using the classical integral
$\displaystyle 
 \int_0^{+\infty}\dfrac{1}{\sqrt{x}}\exp\left(-\lambda_1 x -\dfrac{\lambda_2}{x}\right)dx= \sqrt{\dfrac{\pi}{\lambda_1}}e^{-2\sqrt{\lambda_1 \lambda_2}}
 $   for any $\lambda_1>0$ and $\lambda_2\geq 0$. 
 
\noindent The same argument holds for the term $A_1^-(n)$ and yields
 \begin{align} \label{zjbeVHKZ}
   \lim_{n\to +\infty}A^-_1(n)&=\dfrac{2(1-\gamma)^2}{\pi\sqrt{t_1(t_2-t_1)}}\displaystyle \int_{-\infty}^0  \int_{-\infty}^0\varphi_1(u)\varphi_2(v)e^{-\frac{u^2}{2t_1}} e^{-\frac{(u+v)^2}{2(t_2-t_1)}}du dv
   \notag\\
  &+\dfrac{2\gamma(1-\gamma)}{\pi\sqrt{t_1(t_2-t_1)}}\displaystyle \int_{-\infty}^0  \int_0^{+\infty}\varphi_1(u) \varphi_2(v) e^{-\frac{u^2}{2t_1}} e^{-\frac{(u-v)^2}{2(t_2-t_1)}}du dv.
\end{align}
}

\noindent $\bullet$ Estimate of $A^+_2(n)$ 
\begin{align*}
    A^+_2(n)= \sum_{k=1}^{[nt_1]-1} & \sum_{\ell \geq 1}  \sum_{y \geq 1} \mathbb{E}_x \bigg[\varphi_1\left(\dfrac{X_{[nt_1]}}{\sigma\sqrt{n}}\right)\varphi_2\left(\dfrac{X_{[nt_2]}}{\sigma\sqrt{n}}\right), C_\ell=k, X_k=y,\\ 
    &y+\xi'_{k+1}>0, \ldots, y+\xi'_{k+1}+\ldots + \xi'_{[nt_1]}>0,\ldots, y+\xi'_{k+1}+\ldots + \xi'_{[nt_2]}>0 \bigg]\\  
    = \sum_{k=1}^{[nt_1]-1} & \sum_{y \geq 1}  H_k(x,y) \mathbb{E}  \bigg[\varphi_1\left(\dfrac{y+S'_{[nt_1]-k}}{\sigma\sqrt{n}}\right)\varphi_2\left(\dfrac{y+S'_{[nt_2]-k}}{\sigma\sqrt{n}}\right), \tau^{S'}(y)>[nt_2]-k\bigg].  
\end{align*}
For any $n \geq 1$, let $g_n: r \mapsto g_n(r)$ be the real function defined on $[0, t_1]$ by
\begin{align*}
g_n(r):= n \sum_{y \geq 1} H_{[nr]}(x,y) \mathbb{E}  \bigg[\varphi_1\left(\dfrac{y+S'_{[nt_1]-[nr]}}{\sigma\sqrt{n}}\right)&\varphi_2\left(\dfrac{y+S'_{[nt_2]-[nr]}}{\sigma\sqrt{n}}\right), \tau^{S'}(y)>[nt_2]-[nr]\bigg]\\
\end{align*}
if $0\leq r < \frac{[nt_1]}{n}$  and $g_n(r)=0$ if $\frac{[nt_1]}{n}\leq r\leq t_1 $.\\
In the same way as  above, we set
$$a_n(y):= n  H_{[nr]}(x,y) \mathbb{P}[\tau^{S'}(y)>[nt_2]-[nr]]$$
and
$$b_n(y):= \mathbb{E}  \bigg[\varphi_1\left(\dfrac{y+S'_{[nt_1]-[nr]}}{\sigma\sqrt{n}}\right)\varphi_2\left(\dfrac{y+S'_{[nt_2]-[nr]}}{\sigma\sqrt{n}}\right) \mid \tau^{S'}(y)>[nt_2]-[nr]\bigg].$$
Sequences $(a_n(y))_{y \geq 1}$ and $(b_n(y))_{y \geq 1}$ satisfy assumptions of Lemma \ref{product}. Indeed, the limit of $
\displaystyle \sum_{y \geq 1} a_n(y)$ is given by \eqref{sum a_n(y)} and condition $(ii)$ of this lemma has been checked  previously. Furthermore, by   Theorem $3.2$ in \cite{Bolthausen} and Theorems $2.23$ and $3.4$ in \cite{Iglehart}, it holds 
\begin{align*}
     \lim_{n\to +\infty} b_n(y)&=  \lim_{n\to +\infty} \mathbb{E}  \bigg[\varphi_1\left(\dfrac{y+S'_{[nt_1]-[nr]}}{\sigma\sqrt{[nt_2]-[nr]}}\dfrac{\sqrt{[nt_2]-[nr]}}{\sqrt{n}}\right)\\
    &\hspace{2cm} \varphi_2\left(\dfrac{y+S'_{[nt_2]-[nr]}}{\sigma\sqrt{n}}\frac{\sqrt{[nt_2]-[nr]}}{\sqrt{n}}\right) \mid \tau^{S'}(y)>[nt_2]-[nr]\bigg]\\
    &=\dfrac{1}{\sqrt{2\pi (t_2-t_1)}} \int_0^{+\infty} \int_0^{+\infty} \varphi_1(u) \varphi_2(v)\frac{\sqrt{t_2-r}}{\sqrt{(t_1-r)^3}} ue^{\frac{-u^2}{2(t_1-r)}}\\
    &\hspace{5.7cm}\times\bigg(e^{-\frac{(u-v)^2}{2(t_2-t_1)}}-e^{\frac{-(u+v)^2}{2(t_2-t_1)}}\bigg) du dv.
\end{align*}
It immediately yields 
 \begin{align} \label{JKQBHCFIZGJ}
      \lim_{n\to +\infty} A^+_2(n)&=\frac{\gamma}{\pi \sqrt{2\pi (t_2-t_1)}}  \int_0^{t_1}\bigg(\dfrac{1}{\sqrt{2\pi (t_2-t_1)}} \int_0^{+\infty} \int_0^{+\infty} \varphi_1(u) \varphi_2(v)\notag\\     &\hspace{2cm}\times\frac{\sqrt{t_2-r}}{\sqrt{r(t_2-r)(t_1-r)^3}}ue^{\frac{-u^2}{2(t_1-r)}} \bigg(e^{-\frac{(u-v)^2}{2(t_2-t_1)}}-e^{\frac{-(u+v)^2}{2(t_2-t_1)}}\bigg) du dv \bigg) dr\notag\\
     &=\dfrac{\gamma}{\pi \sqrt{t_1(t_2-t_1)}}  \int_0^{+\infty} \int_0^{+\infty} \varphi_1(u) \varphi_2(v)e^{\frac{-u^2}{2t_1}}\bigg(e^{-\frac{(u-v)^2}{2(t_2-t_1)}}-e^{\frac{-(u+v)^2}{2(t_2-t_1)}}\bigg) du  dv.
 \end{align}
 
\noindent Analogously, one gets
\begin{equation} \label{qzjkbehvgn}
\lim_{n\to +\infty} A^-_2(n)=\dfrac{1-\gamma}{\pi \sqrt{t_1(t_2-t_1)}}  \int_{-\infty}^0 \int_{-\infty}^0 \varphi_1(u) \varphi_2(v)e^{\frac{-u^2}{2t_1}}\bigg(e^{-\frac{(u-v)^2}{2(t_2-t_1)}}-e^{-\frac{(u+v)^2}{2(t_2-t_1)}}\bigg) du  dv
\end{equation}
Combining  (\ref{zkehvrnpejv}), 
(\ref{zjbeVHKZ}), 
(\ref{JKQBHCFIZGJ})
and
(\ref{qzjkbehvgn}), we conclude 
$$ \lim_{n\to +\infty} \mathbb{E}_x \left[\varphi_1(X^{(n)}(t_1))\varphi_2(X^{(n)}(t_2))\right]=  \int_{-\infty}^{+\infty} \int_{-\infty}^{+\infty}\varphi_1(u)\varphi_2(v) p^{\gamma}_{t_1}(0, u)p^{\gamma}_{t_2-t_1}(u,v) du dv. $$

\noindent {\bf Proof of properties (\ref{iehutvngjzo}) and  (\ref{vahbj})}

Following the same strategy as in the one dimensional case,  we decompose $g_n(s_1, s_2)$ as $g_n(s_1, s_2)= \displaystyle \sum_{y \geq 1} \sum_{z \geq 1}  \sum_{w \le 0} a_n(y, z, w)b_n(y, z,w)$, where  
\begin{align*}
a_n(y, z, w):=   n^2  H_{[ns_1]}(x,y) \mathbb{P}[\tau^{S'}(y)>[ns_2]-[ns_1]-1, y+S'_{[ns_2]-[ns_1]-1}=z]\ \mu'(w-z)
\end{align*}
and
\begin{align*}
    b_n(y, z, w):= &\mathbb{E} \bigg[\varphi_1\left(\dfrac{y+S'_{[nt_1]-[ns_1]}}{\sigma\sqrt{n}}\right) \mid \tau^{S'}(y)>[ns_2]-[ns_1]-1, y+S'_{[ns_2]-[ns_1]-1}=z\bigg]
   \\
    & \qquad \times   \mathbb{E}_w\bigg[\varphi_2\left(\dfrac{X_{[nt_2]-[ns_2]}}{\sigma \sqrt{n}}\right)\bigg].
\end{align*}
Properties  (\ref{iehutvngjzo}) and  (\ref{vahbj}) are a direct consequence of Lemma \ref{product}, applied to  the families $(a_n(y, z,w))_{y, z\geq 1, w\leq 0}, (b_n(y, z, w))_{y, z\geq 1, w\leq 0 }$; it thus sufficies  to  check that conditions $(i), (ii)$ and $(iii)$ of this lemma are satisfied in this new situation.

 \underline{ Condition $(i)$.} The sum $\displaystyle  \sum_{y \geq 1} \sum_{z \geq 1}  \sum_{w \le 0} a_n(y, z, w)  $ may be written as
\begin{equation}\label{czjvhegjfbhcj}
\sum_{y \geq 1} \sum_{z \geq 1}  \sum_{w \le 0} a_n(y, z, w) = {1+o(n)\over \sqrt{s_1 (s_2- s_1)^3}}   \sqrt{[ns_1]}\mathcal H_{[ns_1]}({\bf b}'_{[ns_2]-[ns_1]})(x),
\end{equation}
where we   set
$
{\bf b}'_{k}(y)= k^{3/2} \mathbb P[\tau^{S'}(y) =k].
$

As previously,   the sequence  $(\sqrt{[ns_1]}\mathcal H_{[ns_1]})_{n\geq 1}$ converges in $(\mathcal L({{\mathcal B_\delta}}), \Vert \cdot \Vert_{\mathcal B_\delta})$ to the operator ${1\over 2\pi\bigl(c\nu( \check{h}_a)+c'\nu(h'_d)\bigr)}\Pi$; furthermore, the sequence $({\bf b}'_{[ns_2]-[ns_1]})_{n \geq 1}$ converges  in ${\mathcal B_\delta}$ to the function $c'h'_d$.
Hence condition $(i)$ holds  with 
$$A= \dfrac{1}{\sqrt{s_1(s_2-s_1)^3}}\dfrac{c'\nu(h'_d)}{2\pi\bigl(c\nu( \check{h}_a)+c'\nu(h'_d)\bigr)}= \dfrac{\gamma}{2\pi \sqrt{s_1(s_2-s_1)^3}}.$$


 \underline{Condition $(ii)$.}  Fix $\epsilon >0$ and $y_\epsilon\geq 1$.  As above ${\bf h}'_k {\bf \mathbb{1}}_{[y_\epsilon, +\infty[ }$,  the  function  ${\bf b}'_k{\bf \mathbb{1}}_{[y_\epsilon, +\infty)}$ belongs to ${\mathcal B_\delta}$ and $  \vert  {\bf b}'_k{\bf \mathbb{1}}_{[y_\epsilon, +\infty)}\vert _{\mathcal B_\delta} \leq {C_1\over y_\epsilon^\delta}$  for some constant $C_1>0$.
By  (\ref{czjvhegjfbhcj}), it follows
    \begin{align*}
 0\leq \sum_{y\geq y_\epsilon, z \geq 1, w\leq 0} a_n(y, z, w) &\preceq     \sqrt{[ns_1]}\mathcal H_{[ns_1]}({\bf b}'_{[ns_2]-[ns_1]}{\bf \mathbb{1}}_{[y_\epsilon, +\infty)})(x),
 \\
 & \preceq  
\underbrace{\sup_{n \geq 1}  \Vert \sqrt{[ns_1]}\mathcal H_{[ns_1]}\Vert_{\mathcal B_\delta}}_{<+\infty}\  
\underbrace{ \sup_{n \geq 1} \vert {\bf b}'_{[ns_2]-[ns_1]}{\bf \mathbb{1}}_{[y_\epsilon, +\infty)}\vert _{\mathcal B_\delta}}_{ \preceq {1\over y_\epsilon^\delta}}.
\end{align*}
This last  right hand side term  is $<\epsilon$ for sufficiently large $y_\epsilon$.

 Furthermore,   $0\leq a_n(y, z, w) \preceq (1+y)(1+z) \mu'(w-z)$ for any  fixed $y, z,  w$ and any $n \geq 1$; hence, for any $0\leq y < y_\epsilon,$ it holds  $\displaystyle \sum_{z +\vert w\vert >t}  a_n(y, z, w) <\epsilon$ if $t$ is large enough since $\sum_{z\geq 1}z\mu'(]-\infty , -z]) <+\infty$. This completes the argument.

{\underline{Condition $(iii)$.}  On the one hand, by (\ref{two}),  for any $y, z \geq 1$,
\begin{align*}
    & \lim_{n\to +\infty}\mathbb{E} \bigg[\varphi_1\left(\dfrac{y+S'_{[nt_1]-[ns_1]}}{\sigma\sqrt{n}}\right) \mid \tau^{S'}(y)>[ns_2]-[ns_1]-1, y+S'_{[ns_2]-[ns_1]-1}=z\bigg]\\
    &\hspace{2cm}=  \int_0^{+\infty} 2\varphi_1(u'\sqrt{s_2-s_1})  \exp\left(\dfrac{-{u'}^2}{2\frac{t_1-s_1}{s_2-s_1} \frac{s_2-t_1}{s_2-s_1}}\right) \frac{{u'}^2}{\sqrt{2\pi \frac{(t_1-s_1)^3}{(s_2-s_1)^3} \frac{(s_2-t_1)^3}{(s_2-s_1)^3}}}du'\\
    &\hspace{2cm}=\dfrac{2}{\sqrt{2\pi}} \int_0^{+\infty} \varphi_1(u)  \exp\left(\dfrac{-u^2}{2\frac{(t_1-s_1)(s_2-t_1)}{s_2-s_1}}\right) \frac{u^2}{\sqrt{\frac{(t_1-s_1)^3(s_2-t_1)^3}{(s_2-s_1)^3}}}du.
\end{align*}
}
On the other hand, the  one dimensional case  $m=1$  studied above yields, for any $w \leq 0$,
$$  \lim_{n\to +\infty}   \mathbb{E}_w\bigg[\varphi_2\left(\dfrac{X_{[nt_2]-[ns_2]}}{\sigma \sqrt{n}}\right)\bigg]  = \int_{\mathbb{R}}\Tilde{\varphi_2}(v)\frac{2\exp\bigg(\dfrac{-v^2}{2(t_2-s_2)}\bigg)}{\sqrt{2\pi (t_2-s_2)}}dv$$
with $\Tilde{\varphi}_2(v)= \gamma \varphi_2(v)\mathbb{1}_{(0,+\infty)}(v)+(1-\gamma) \varphi_2(v)\mathbb{1}_{(-\infty,0)}(v).$

%
%
%
\vspace{3mm}
  
\subsection{Tightness of the sequence \texorpdfstring{$\{ X^{(n)}\}_{n \geq 1}$}{X}} 
Let us recall the modulus of continuity of a function $f:[0,T] \to \mathbb{R}$ is defined by
$$\omega_f(\delta):= \sup\{|f(t)-f(s)|:t, s\in [0,T] \,s.t.\, |t-s|\leq \delta  \}.$$
 By Theorems $7.1$ and $7.3$ in \cite{Billingsley}, we have to show that the following conditions hold:
\begin{enumerate}[(i)]
    \item For every $\eta>0$, there exist $a>0$ and $n_\eta\geq 1$ such that
    $$\mathbb P[|X^{(n)}(0)|\geq a] \leq \eta, \quad \forall n\geq n_\eta.$$
    
    \item  For every $\epsilon>0$ and $\eta>0$, there exist $\delta\in (0,1)$ and $n_{\epsilon, \eta}\geq 1$ such that
    $$\mathbb P[\omega_{X^{(n)}}(\delta)\geq \epsilon] \leq \eta, \quad \forall n\geq n_{\epsilon, \eta}.$$
\end{enumerate}
\begin{proof}
 Condition (i) is obviously satisfied.\\
 Let us now check the condition (ii). Set $I_{n,\delta}:= \{(i, j) \in \mathbb N \mid 1\leq i <j \leq n \text{ and } |i-j| \leq n\delta\}$ and note that we have
    \begin{align}\label{modulus bound}
        \omega_{X^{(n)}}(\delta) \leq \dfrac{7}{\min\{\sigma, \sigma'\}\sqrt{n}}\left(\displaystyle \sup_{(i,j)\in I_{n,\delta}}|S_i-S_j|+\sup_{(i,j)\in I_{n,\delta}}|S'_i-S'_j|\right).
    \end{align}
We suggest the following figure as a useful illustration of this bound. 
\begin{figure}[H]
    \centering
    \begin{subfigure}[b]{1.02\textwidth} \includegraphics[width=\textwidth]{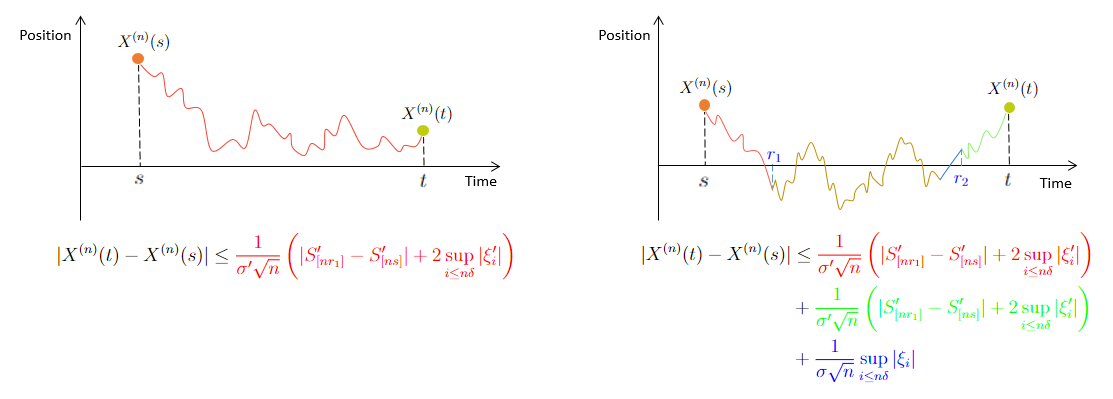}
    \end{subfigure}
    \begin{subfigure}[b]{1.02\textwidth}
\includegraphics[width=\textwidth]{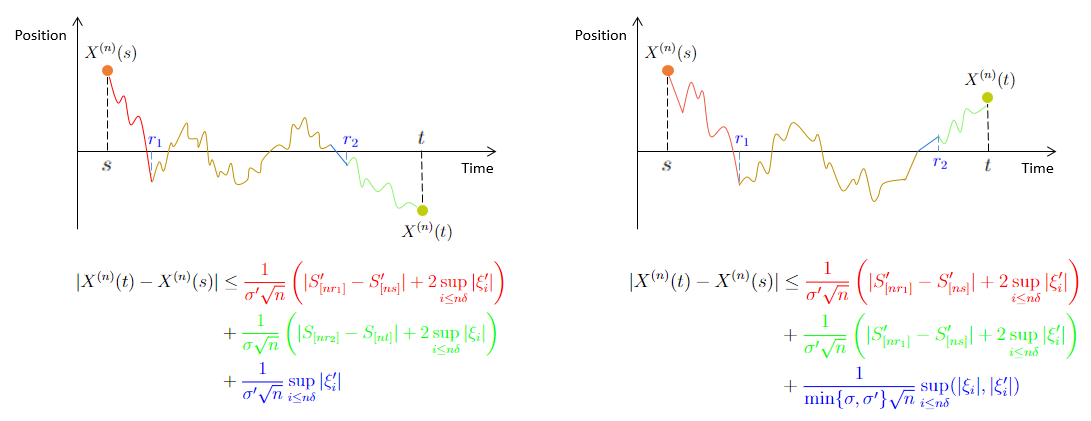}
    \end{subfigure}
    \caption{Fluctuation of the normalized oscillating random walk on the time interval $[s, t]$ with its first and last crossing times $r_1$ and $r_2$, respectively.}
\end{figure}
\noindent Moreover, by \cite{Billingsley} (see Chapter $7$) one also gets
    \begin{align}\label{tight}
        \lim_{\delta \to 0} \lim_{n \to +\infty}\mathbb P\left[\dfrac{1}{\sigma\sqrt{n}} \sup_{(i,j)\in I_{n,\delta}}|S_i-S_j| \geq \epsilon\right]= \lim_{\delta \to 0} \lim_{n \to +\infty}\mathbb P\left[\dfrac{1}{\sigma'\sqrt{n}} \sup_{(i,j)\in I_{n,\delta}}|S'_i-S'_j| \geq \epsilon\right]=0.
    \end{align}
    The condition (ii) immediately follows by \eqref{modulus bound} and \eqref{tight}. Hence, we conclude that the sequence $\{X^{(n)}\}_{n\geq 1}$ is tight.
 \end{proof}


\end{document}